\theoremstyle{plain}
\newtheorem{lem}{Lemma}[section]
\newtheorem{cor}[lem]{Corollary}
\newtheorem{prop}[lem]{Proposition}
\newtheorem{thm}[lem]{Theorem}
\theoremstyle{definition}
\newtheorem{defn}[lem]{Definition}
\newtheorem{ex}[lem]{Example}
\newtheorem{problem}[lem]{Problem}
\newtheorem{remark}[lem]{Remark}
\newtheorem{conjecture}[lem]{Conjecture}
\newcommand{\bbn}{\mathbb{N}}
\newcommand{\bbf}{\mathbb{F}}
\newcommand{\Fq}{\mathbb{F}_q}
\newcommand{\fr}{\bbf_2[x^2,x^3]}
\newcommand{\fx}{\bbf_2[x]}
\newcommand{\bc}{b_c(n)}
\newcommand{\bw}{b_w(n)}
\newcommand{\bt}{b_t(n)}
\newcommand{\bcn}[1]{b_c(#1)}
\newcommand{\bwn}[1]{b_w(#1)}
\newcommand{\btn}[1]{b_t(#1)}
\begin{document}

\title{On atomic density of numerical semigroup algebras}

\author[A.A.~Antoniou]{Austin~A.~Antoniou}
\address{Austin~A.~Antoniou,
    Department of Mathematics,
    The Ohio State University
}
\email{antoniou.6@osu.edu}

\author[R.A.C.~Edmonds]{Ranthony A.C.~Edmonds}
\address{Ranthony A.C.~Edmonds,
    Department of Mathematics,
    The Ohio State University
}
\email{edmonds.110@osu.edu}

\author[B.~Kubik]{Bethany Kubik}
\address{Bethany Kubik,
    Department of Mathematics and Statistics,
    University of Minnesota Duluth
}
\email{bakubik@d.umn.edu}

\author[C.~O'Neill]{Christopher O'Neill}
\address{Christopher O'Neill,
    Mathematics and Statistics Department,
    San Diego State University
}
\email{cdoneill@sdsu.edu}

\author[S.~Talbott]{Shannon Talbott}
\address{Shannon Talbott,
    Department of Mathematics and Computer Science,
    Moravian College
}
\email{talbotts@moravian.edu}

\keywords{atomic density; numerical semigroup; finite field}
\subjclass[2010]{Primary 20M14, 12E05, 13A05}

\begin{abstract}
A numerical semigroup $S$ is a cofinite, additively-closed subset of the nonnegative integers that contains $0$.  In this paper, we initiate the study of atomic density, an asymptotic measure of the proportion of irreducible elements in a given ring or semigroup, for semigroup algebras. It is known that the atomic density of the polynomial ring $\mathbb{F}_q[x]$ is zero for any finite field $\mathbb{F}_q$;  we prove that the numerical semigroup algebra $\mathbb{F}_q[S]$ also has atomic density zero for any numerical semigroup~$S$. We also examine the particular algebra $\mathbb{F}_2[x^2,x^3]$ in more detail, providing a bound on the rate of convergence of the atomic density as well as a counting formula for irreducible polynomials using M\"{o}bius inversion, comparable to the formula for irreducible polynomials over a finite field $\mathbb{F}_q$.
\end{abstract}

\maketitle

\section{Introduction}

In this paper, $\bbn$ denotes the set of nonnegative integers, and $\mathbb F_q$ denotes the field with $q$ elements.

A \emph{numerical semigroup} is a subset $S\subseteq \bbn$ that is closed under addition, has finite complement in~$\bbn$, and contains zero.  Every numerical semigroup admits a unique generating set that is minimal with respect to containment and, unless otherwise stated, whenever we write
\[
S = \langle n_1, \ldots, n_k \rangle
= \{a_1n_1 + \cdots + a_kn_k : a_i \in \bbn\},
\]
we assume $n_1, \ldots, n_k$ are the minimal generators of $S$.  The smallest possible value of $k$ is called the \emph{embedding dimension} of $S$.  We say $a \in \bbn$ is a \emph{gap} of $S$ if $a \notin S$, and the largest gap of $S$, denoted~$\mathsf F(S)$, is called the \emph{Frobenius number} of $S$.  For more background on numerical semigroups, see~\cite{numericalappl}.

Given a numerical semigroup $S$ and a field $\bbf$, the \emph{semigroup algebra} $\bbf[S]$ is the set
\[
\bbf[S] = \{a_0 + a_1x + \cdots + a_dx^d \in \bbf[x] : a_i = 0 \text{ whenever } i \notin S\} \subset \bbf[x]
\]
of polynomials consisting only of terms $x^i$ with $i \in S$.
Note that if $S = \bbn$, then $\bbf[S] = \bbf[x]$. Semigroup algebras, when viewed as quotients of toric ideals~\cite{cls}, are central to combinatorial commutative algebra~\cite{grobpoly} and arise in a host of statistical~\cite{markovbook,algmarkov} and computational~\cite{clo} applications.

A nonconstant polynomial $f(x)\in \bbf[S]$ is \emph{irreducible} if we cannot write $f(x) = a(x)b(x)$ such that $a(x), b(x) \in \bbf[S]$ both have positive degree.  A \emph{factorization} of $f$ is an expression for $f$ as a product of irreducible elements. Unless $S = \bbn$, there will exist elements in $\bbf[S]$ that admit nonunique factorization, as, for instance,
\[
x^{n_1n_2} = (x^{n_1})^{n_2} = (x^{n_2})^{n_1}
\]
since $x^{n_1}$ and $x^{n_2}$ are necessarily irreducible in $\bbf[S]$
(here, ``unique'' means up to reordering and up to associates).
In the case where $S$ is an arbitrary semigroup, the semigroup algebra $\bbf[S]$ need not be atomic; that is, at least one non-unit element of $\bbf[S]$ cannot be written as a product of finitely many irreducible elements.  For example, if~$\bbf$ is a field of finite characteristic, it is possible to construct a torsion free atomic monoid $S$ of rank at least $2$ so that $\bbf[S]$ is not atomic~ \cite{coykendall}.  When $S$ is a numerical semigroup, however, $\bbf[S]$ is always atomic, and moreover, every element of $\bbf[S]$ has only finitely many factorizations. To see this, note that $\bbf[S] \subset \bbf[x]$ is a subalgebra of a unique factorization domain.

The concept of nonunique factorization has been studied in a wealth of settings, such as block monoids and Krull monoids~\cite{lensetprogress}, which are central to additive combinatorics and the study of algebraic number fields~\cite{krullcombinatorialsurvey}.  Numerical semigroups~\cite{numericalsurvey}, as well as the algebras over them~\cite{baruccinsalg}, also make frequent appearances.  This is especially true of the semigroup algebra $\fr$, which regularly occurs as a counterexample~\cite{nsalg1,nsalg2}.
For a thorough introduction to this vast research area, see~\cite{nonuniq}.

In this paper, we consider semigroup algebras of the form $\Fq[S]$ for some finite field $\Fq$ and numerical semigroup $S$. The semigroup algebra $\Fq[S]$ is naturally a subalgebra of $\Fq[x]$, whose irreducible polynomials and factorization structure play a critical role in coding theory~\cite{berlekampcodingtheory,ecctheory1} and combinatorial design theory~\cite{designtestgeneration,sparsesensingfinitegeometry}.  Let
\[
\Fq[S]^{(n)} = \{f(x) \in \Fq[S] : \deg f(x) = n\}
\]
denote the set of all polynomials of degree $n$ in $\Fq[S]$, and let
\[
a_q^S(n)
= \#\{\text{irreducible } f(x) \in \Fq[S]^{(n)}\}
\]
denote the number of irreducible elements of $\Fq[S]$ of degree $n$.
We wish to examine the \emph{atomic density} of $\Fq[S]$, that is, the limiting value of $\rho_q^S(n)$ as $n \to \infty$, where
\[
\rho_q^S(n)
= \frac{a_q^S(n)}{|\Fq[S]^{(n)}|}
\]
denotes the density of irreducible polynomials of degree $n$ in $\Fq[S]$. If $S = \bbn$, so that $\Fq[S] = \Fq[x]$, then it is known that the atomic density is 0 (see Theorem~\ref{t:polyatomicdensity}).  This means that, in some  sense, ``most'' polynomials in $\Fq[x]$ are reducible.

The notion of atomic density has also been examined in the context of arithmetic congruence monoids~\cite{baginski2014arithmetic,acmperiodic}, which are multiplicative submonoids of $\mathbb Z_{\ge 1}$ of the form
\[
M_{a,b} = \{n \in \mathbb Z_{\ge 1} : n \equiv a \bmod b\}
\]
with $a^2 \equiv a \bmod b$.  Atomic density in this setting is defined as the limiting value as $n \to \infty$ of the proportion of elements of $M_{a,b} \cap [1,n]$ that are irreducible.
Note that both arithmetical congruence monoids and numerical semigroup algebras over finite fields possess a natural well-ordering under which each element succeeds all of its divisors, and the definitions of atomic density respect these orderings.  The same is not true, however, for subalgebras of a multivariate multivariate polynomial ring, which has many different term orders, and it is for this reason that we restrict our focus to numerical semigroup algebras.

Surprisingly, atomic density does not appear to have been previously considered in the context of numerical semigroup algebras.  In this paper, we prove the following generalization to numerical semigroup algebras.

\begin{thm}\label{t:atomicdensity}
Any numerical semigroup algebra $\Fq[S]$ has atomic density $0$; that is,
\[\lim_{n \to \infty} \rho_q^S(n) = 0.\]
\end{thm}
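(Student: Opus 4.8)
The plan is to transfer the question to the polynomial ring $\Fq[x]$ by reducing modulo the conductor. Let $c=\mathsf F(S)+1$, so that every integer $\ge c$ lies in $S$. A polynomial $p\in\Fq[x]$ lies in $\Fq[S]$ precisely when its coefficients at the (finitely many) gaps of $S$ vanish, and since all gaps are $<c$, this depends only on the reduction $\bar p$ of $p$ in $R:=\Fq[x]/(x^c)$. Thus $p\in\Fq[S]$ if and only if $\bar p$ lies in $B$, the image of $\Fq[S]$ in $R$. Because $S$ is closed under addition, $B$ is an $\Fq$-subalgebra of $R$, so $U:=B\cap R^\times$ is a finite submonoid of the unit group $R^\times$ and hence a subgroup; set $m:=[R^\times:U]$, a constant depending only on $q$ and $S$. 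For each $k$ I would also record $V_k:=\{w\in R^\times : x^kw\in B\}$, which is nonempty and is a union of cosets of $U$, since $V_kU\subseteq V_k$ as $B$ is closed under multiplication by $U\subseteq B$.

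First I would prove the key structural lemma: if $f\in\Fq[S]$ factors over $\Fq[x]$ as $f=c_0\,x^k g_1\cdots g_s$ with $c_0\in\Fq^\times$, each $g_i$ a monic irreducible $\ne x$, and $s\ge m+1$, then $f$ is reducible in $\Fq[S]$. Consider the partial products $\tau_j:=\bar g_1\cdots\bar g_j\in R^\times$ for $0\le j\le s$; since there are $s+1\ge m+2$ of them, two lie in a common coset of $U$, say $\tau_iU=\tau_jU$ with $i<j$, so $\prod_{i<\ell\le j}\bar g_\ell\in U$. Grouping those factors into $B':=\prod_{i<\ell\le j}g_\ell$ and the remaining factors together with $c_0x^k$ into $A$, one checks $B'\in\Fq[S]$ from $\prod\bar g_\ell\in U\subseteq B$, while the complementary product lies in $\bar u\,U\subseteq V_k$, where $\bar u=\prod_\ell\bar g_\ell$ and $\bar u\in V_k$ because $f\in\Fq[S]$; hence $A\in\Fq[S]$ as well, and both factors are nonconstant.

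The essential point is that the two conditions needed to split $f$ inside $\Fq[S]$ collapse to the single coset condition $\prod\bar g_\ell\in U$, precisely because $V_k$ is a union of $U$-cosets. This is what lets the argument treat the case $f(0)=0$, where $x^k$ genuinely divides $f$, uniformly with the case $f(0)\ne0$. Controlling this low-order vanishing is the step I expect to be the main obstacle: for $k<c$ the monomial $x^k$ interacts with the gaps of $S$, so no single distinguished factor need split $f$, and only the coset bookkeeping above makes the uniform treatment go through.

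Finally I would count. The lemma shows that any $\Fq[S]$-irreducible $f$ of degree $n$, written $f=c_0x^ku$ with $u$ monic and $u(0)\ne0$, has $u$ a product of at most $m$ irreducibles of $\Fq[x]$. Since $f$ is recovered from $(k,c_0,u)$,
\[
a_q^S(n)\ \le\ (q-1)\sum_{k=0}^{n} N_{\le m}(n-k),
\]
where $N_{\le m}(d)$ counts monic degree-$d$ polynomials over $\Fq$ with at most $m$ irreducible factors. The standard estimate $N_{\le m}(d)=O\!\big(q^{d}(\log d)^{m-1}/d\big)=o(q^{d})$ — the refinement of Theorem~\ref{t:polyatomicdensity} to polynomials with boundedly many irreducible factors — then yields $a_q^S(n)=o(q^{n})$ after summing the geometric-type series. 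As $|\Fq[S]^{(n)}|=(q-1)q^{\,n-g}$ for $n\ge c$, where $g=\#(\bbn\setminus S)$, we conclude $\rho_q^S(n)=a_q^S(n)/\big((q-1)q^{\,n-g}\big)\to 0$.
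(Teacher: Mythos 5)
Your proposal is correct in substance but takes a genuinely different route from the paper's. The paper works directly with low-order coefficients: its Lemma~\ref{lem:producing semigroup factor} is an inductive pigeonhole argument showing that among any $q^{\mathsf{F}(S)}$ factors with nonzero constant term there is a subproduct lying in $\Fq + x^{\mathsf{F}(S)+1}\Fq[x]$, its Lemma~\ref{lem:divisor closedness} (divisor-closedness) then splits an over-long factorization, and it must separately bound the exponent of $x$ by $2(\mathsf{F}(S)+1)$ (Proposition~\ref{p:factorclassification}). You instead transfer everything to the finite quotient $R=\Fq[x]/(x^c)$, note that membership in $\Fq[S]$ depends only on the residue there, and run the pigeonhole on cosets of the subgroup $U=B\cap R^\times$ applied to the partial products $\tau_j$. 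This single coset argument subsumes both of the paper's lemmas: since $g\in\Fq[S]$ with $g(0)\neq 0$ is the same as $\bar g\in U$, and $U$ is a group with $V_kU\subseteq V_k$, your bookkeeping is exactly divisor-closedness in disguise; and because you sum over all $k$ and let the geometric decay of $q^{n-k}$ absorb the $x$-power, you never need the paper's bound $m<2(\mathsf{F}(S)+1)$. Your length bound $s\le[R^\times:U]$ is comparable to the paper's $q^{\mathsf{F}(S)}$ (since $[R^\times:U]\le|R^\times|=(q-1)q^{\mathsf{F}(S)}$) and can be sharper for specific $S$. The one ingredient you import, $N_{\le m}(d)=O\bigl(q^d(\log d)^{m-1}/d\bigr)$, is precisely what the paper proves from scratch by combining Lemma~\ref{lem:log lemma} with Theorem~\ref{t:polyatomicdensity}, so nothing circular is hidden; the paper's treatment is more self-contained, while yours is more conceptual and transfers more readily to other subalgebras defined by congruence conditions modulo $x^c$.

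Two small repairs are needed. First, your parenthetical claim that $V_k$ is nonempty for every $k$ is false when $k$ is a gap of $S$: the coefficient of $x^k$ in $x^kw$ is $w(0)\neq 0$ for any unit $w$, so for instance $V_1=\emptyset$ when $S=\langle 2,3\rangle$. This is harmless, since in the lemma the needed membership $\bar u\in V_k$ is witnessed by $f$ itself. Second, in the structural lemma you assert without argument that both factors are nonconstant; if $k=0$ and the only coset collision among $\tau_0,\dots,\tau_s$ were $\tau_0U=\tau_sU$, then $A=c_0$ would be a unit and you would have produced no factorization. Your hypothesis $s\ge m+1$ supplies exactly the slack to fix this: apply the pigeonhole only to the $s\ge m+1$ partial products $\tau_0,\dots,\tau_{s-1}$, which forces $j\le s-1$, so $g_s$ divides $A$ and both factors have positive degree. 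With these one-line patches the argument is complete.
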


The paper is organized as follows. In Section~\ref{sec:numsemigpalg}, we prove Theorem~\ref{t:atomicdensity}, the main result of the paper, by splitting the irreducible elements of $\Fq[S]$ into finitely many classes based on their factorizations in~$\Fq[x]$ (see Proposition~\ref{p:factorclassification}).  In the remaining sections of the paper, we obtain more refined results for the semigroup algebra $\fr$.  In Section~\ref{sec:friendlyring}, we prove that the irreducible polynomials of $\fr$ can be partitioned into three distinct classes (see Proposition~\ref{p:lineartermfactors}) and use this to provide an upper bound on the convergence rate of the limit in Theorem~\ref{t:atomicdensity} (see Theorem~\ref{t:friendlymain}).  In Section~\ref{sec:counting}, we provide a formula for the number of irreducible polynomials of each degree in $\fr$ in terms of
the M\"obius function (see Lemma~\ref{l:frcount}), analogous to a well-known formula for the number of irreducible polynomials of each degree in $\bbf_2[x]$ that follows from the M\"obius inversion formula.

Throughout this project, we used the \texttt{GAP} package \texttt{numericalsgps}~\cite{numericalsgpsgap} from within \texttt{Sage}, with the help of the \texttt{numsgpsalg}~\cite{numsgpsalg} and \texttt{NumericalSemigroup.sage}~\cite{numericalsgpssage} packages, both available on Github.

\section{Atomic density of numerical semigroup algebras} \label{sec:numsemigpalg}

Let $q$ be a prime power and let $S$ be a numerical semigroup.
In this section, we show that the asymptotic density of $\Fq[S]$ is zero.
We begin by viewing the irreducible polynomials of $\Fq[S]$ as elements of $\Fq[x]$ and by characterizing their factorizations in $\Fq[x]$ (see Proposition~\ref{p:factorclassification}).
In particular, we establish a bound on the factorization length that depends only on $q$ and $S$ (and \textit{not} the individual polynomials involved).
Then, we repeatedly apply Theorem~\ref{t:polyatomicdensity} to show that the proportion of irreducible polynomials in $\Fq[S]$ of degree $n$ shrinks with $n$.

We begin with two technical lemmas.  The first is of extremal combinatorial flavor and asserts that any polynomial $f \in \Fq[x]$ with sufficiently many factors is divisible by some polynomial which has only a constant term and ``high-degree'' terms (and so consequently lives in $\Fq[S]$).  The second implies that the set of polynomials in $\Fq[S]$ with nonzero constant term is divisor closed in $\Fq[x]$, that is, if $f\in x\Fq[S]+\Fq^\times$ and $g\in \Fq[x]$ divides $f$, then we actually have $g\in x\Fq[S]+\Fq^\times$.

\begin{lem}\label{lem:producing semigroup factor}
Let $q$ be a prime power, let $N \in \bbn$ be a positive integer and let $k \ge q^{N-1}$.
For any list of polynomials $f_1, \ldots, f_k \in \Fq[x]$ with each satisfying $f_i(0) \ne 0$, there is a subproduct $g$ of $f_1 \cdots f_k$ with $g \in \Fq + x^N \Fq[x]$.
That is, there exist $1 \le i_1 < \cdots < i_\ell \le k$ with $f = f_{i_1}\cdots f_{i_\ell} \in \Fq + x^N \Fq[x]$.
\end{lem}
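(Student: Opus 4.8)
The plan is to reinterpret the condition $g \in \Fq + x^N\Fq[x]$ in terms of the quotient ring $R = \Fq[x]/(x^N)$. A polynomial $g$ lies in $\Fq + x^N\Fq[x]$ precisely when its image in $R$ is a constant, i.e., when all of its coefficients in degrees $1$ through $N-1$ vanish. Since each $f_i$ satisfies $f_i(0) \ne 0$, its image $\bar f_i$ is a unit in the local ring $R$, and hence so is any product of the $\bar f_i$. Thus the goal becomes: find a nonempty subproduct of $\bar f_1, \ldots, \bar f_k$ whose image lies in the subgroup $\Fq^\times$ of constant units inside $R^\times$.

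First I would record the relevant group theory. The unit group $R^\times$ consists of those $a_0 + a_1 x + \cdots + a_{N-1}x^{N-1}$ with $a_0 \ne 0$, so $|R^\times| = (q-1)q^{N-1}$, while the constant units $\Fq^\times$ form a subgroup of order $q-1$. Hence the number of cosets $[R^\times : \Fq^\times]$ equals $q^{N-1}$. This count matching the hypothesis $k \ge q^{N-1}$ is the crux of the argument.

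The main step is then a pigeonhole on prefix products. Setting $P_0 = 1$ and $P_j = \bar f_1 \cdots \bar f_j$ for $1 \le j \le k$, I obtain $k+1 \ge q^{N-1}+1$ units of $R$. Since there are only $q^{N-1}$ cosets of $\Fq^\times$ in $R^\times$, two of these prefix products, say $P_i$ and $P_j$ with $i < j$, must lie in the same coset. Then $\overline{f_{i+1}\cdots f_j} = P_i^{-1}P_j \in \Fq^\times$, so the (nonempty, consecutive-index) subproduct $g = f_{i+1}\cdots f_j$ has constant image in $R$ and therefore lies in $\Fq + x^N\Fq[x]$, as required.

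I do not anticipate a serious obstacle here; the content is essentially choosing the right algebraic frame. The one point that requires care is verifying that the number of cosets is exactly $q^{N-1}$, so that the extremal hypothesis $k \ge q^{N-1}$ is used sharply, with the extra $+1$ coming from including the empty prefix product $P_0$; one must also confirm that the subproduct produced is nonempty, which holds since $i < j$ forces at least one factor. Everything else is routine.
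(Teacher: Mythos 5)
Your proof is correct, and it takes a genuinely different route from the paper's. The paper argues by induction on $N$: given $k \ge q^N$ polynomials, it splits them into $q$ blocks of size $q^{N-1}$, applies the inductive hypothesis to each block to obtain $g_1,\dots,g_q \in \Fq + x^N\Fq[x]$ normalized so that $g_i(0)=1$, and then pigeonholes on the prefix sums $c_1+\cdots+c_t$ of the $x^N$-coefficients: either some prefix sum vanishes or two coincide, and in either case a consecutive subproduct lands in $\Fq + x^{N+1}\Fq[x]$. You compress all of this into a single multiplicative pigeonhole: in $R = \Fq[x]/(x^N)$ the images $\bar f_i$ are units, the constant units $\Fq^\times$ form a subgroup of index exactly $q^{N-1}$ in $R^\times$ (since $|R^\times|=(q-1)q^{N-1}$), and among the $k+1 \ge q^{N-1}+1$ prefix products $P_0,\dots,P_k$ two must share a coset, forcing the consecutive subproduct $P_i^{-1}P_j = \overline{f_{i+1}\cdots f_j}$ into $\Fq^\times$. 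The two arguments are cousins---the paper's inductive step is itself a prefix-pigeonhole, applied one coefficient at a time, which is why its bound arises as $q \cdot q^{N-1}$---but yours is shorter, avoids induction entirely, makes the constant $q^{N-1}$ conceptually transparent as the index $[R^\times : \Fq^\times]$, and generalizes verbatim to the statement that any sequence of $[G:H]$ elements in a finite group $G$ admits a nonempty consecutive subproduct lying in the subgroup $H$. What the paper's induction buys in exchange is that it stays entirely at the level of polynomial coefficients and pigeonhole on the finite set $\Fq$, never invoking the quotient ring or the structure of its unit group.
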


\begin{proof}
If $N=1$, then $\Fq + x^N\Fq[x] = \Fq[x]$, so the statement is trivial.
Suppose, by way of induction, that the statement of the lemma is true for some $N\ge 1$, and let $f_1,\dots, f_k\in \Fq[x]$ with $k \ge q^N$ and $f_i(0)\neq0$ for each $i\le k$.

Since $k \ge q q^{N-1}$, we can apply the inductive hypothesis $q$ times to find $g_1, \ldots, g_q \in \Fq + x^N \Fq[x]$ with $g_1\cdots g_q \mid f_1\cdots f_k$.
To be precise, we treat the $q^N$ polynomials as $q$ separate collections of $q^{N-1}$ polynomials, applying the inductive hypothesis to each collection.
Notice, since each $f_i(0) \neq 0$, then each of the $g_i$ also has nonzero constant term.
Replacing $g_i$ with $g_i/g_i(0)$ where needed, we may assume that $g_i(0)=1$ for every $i$.

Let $c_i$ be the $x^N$ coefficient of $g_i$, so $g_i \equiv c_i x^N + 1 \pmod {x^{N+1}}$.
Notice that whenever $1 \le s < t \le q$,
\[
g_s\cdots g_t \equiv (c_s x^N + 1)\cdots(c_t x^N + 1) \equiv \left(\sum_{i=s}^t c_i\right) x^N + 1 \pmod {x^{N+1}}.
\]
Now, if any of the $q$ sums $c_1+\cdots+c_t$ for $t\in [1,q]$ is zero, we see that $g_1 \cdots g_t \in \Fq + x^{N+1}\Fq[x]$.
On the other hand, if none of these sums is zero, then two of the sums must be the same, meaning for some $s < t$, we have $c_1+\cdots+c_s = c_1+\cdots+c_t$.
This implies
\[c_{s+1} + \cdots + c_t = (c_1 + \cdots + c_t) - (c_1 + \cdots + c_s) = 0,\]
which means $g_{s+1}\cdots g_t$ has no nonconstant term of degree less than $N+1$.
In either case, we have found a subproduct in $ \Fq + x^{N+1}\Fq[x]$ that divides $g_1\cdots g_q$ and $f_1\cdots f_k$, as desired.
\end{proof}

\begin{lem}\label{lem:divisor closedness}
Let $\bbf$ be a field and $S$ be a numerical semigroup.
Suppose $g\in \bbf[S]$ with $g(0) \neq 0$ and $h\in \bbf[x]$ such that $gh\in \bbf[S]$.
Then $h\in \bbf[S]$.
\end{lem}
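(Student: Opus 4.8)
The plan is to track the coefficients of $h$ directly and show, via a minimal-counterexample argument, that none of the ``forbidden'' coefficients of $h$ (those indexed outside $S$) can be nonzero. Write $g = \sum_{i \in S} g_i x^i$ with $g_0 = g(0) \neq 0$, and write $h = \sum_{j \geq 0} h_j x^j \in \bbf[x]$. Since the condition $h \in \bbf[S]$ is precisely the condition that $h_j = 0$ for every $j \notin S$, it suffices to prove the latter.

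Suppose, for contradiction, that some coefficient of $h$ indexed outside $S$ is nonzero, and let $j_0$ be the smallest index with $j_0 \notin S$ and $h_{j_0} \neq 0$. The first key step is a purely combinatorial observation about $S$: for any $i \in S$ with $0 < i \leq j_0$, the complementary index $j_0 - i$ cannot lie in $S$, since otherwise $j_0 = i + (j_0 - i)$ would exhibit $j_0$ as a sum of two elements of $S$, forcing $j_0 \in S$ and contradicting our assumption. In particular each such $j_0 - i$ satisfies $j_0 - i < j_0$ and $j_0 - i \notin S$, so by the minimality of $j_0$ we must have $h_{j_0 - i} = 0$.

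The second step reads off the coefficient of $x^{j_0}$ in the product $gh$. This coefficient equals $\sum_{i \in S,\, 0 \leq i \leq j_0} g_i h_{j_0 - i}$; the $i = 0$ term is $g_0 h_{j_0}$, and by the previous step every term with $i > 0$ vanishes. On the other hand, since $gh \in \bbf[S]$ and $j_0 \notin S$, the coefficient of $x^{j_0}$ in $gh$ is zero. Combining these two facts yields $g_0 h_{j_0} = 0$; because $g_0 \neq 0$ and $\bbf$ is a field (hence an integral domain), we conclude $h_{j_0} = 0$, contradicting the choice of $j_0$. Therefore no coefficient of $h$ indexed outside $S$ is nonzero, and $h \in \bbf[S]$.

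The argument is entirely elementary, and the only place requiring genuine care is the complementary-index observation of the second paragraph: it is exactly the additive closure of $S$, together with the nonvanishing of $g(0)$, that allows the convolution defining the degree-$j_0$ coefficient of $gh$ to collapse to the single term $g_0 h_{j_0}$. I expect this to be the crux of the proof, since it is what makes the induction propagate down to the minimal index.
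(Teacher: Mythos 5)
Your proof is correct and is essentially the same as the paper's: both arguments take the minimal index $d \notin S$ with a nonzero coefficient of $h$, use the additive closure of $S$ to show every other term in the degree-$d$ convolution coefficient of $gh$ vanishes (either because $g$'s coefficient is zero or by minimality), and then conclude $g(0)\,h_d = 0$, a contradiction since $g(0) \neq 0$. The only difference is cosmetic: you restrict the sum to $i \in S$ from the outset, whereas the paper runs over all pairs $(i,j)$ with $i+j=d$ and kills them case by case.
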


\begin{proof}
Suppose to the contrary that $h \notin \bbf[S]$.
Write $g = c_0 + c_1x + \cdots + c_mx^m$ and $h = c'_0 + \cdots + c'_nx^n$, and let
\[
d = \min\{j: c'_j \neq 0 \text{ and } j \notin S\}.
\]
Since $gh\in \bbf[S]$, the degree $d$ term of $gh$ is $0$.
On the other hand, we may express the degree $d$ term of $gh$ in terms of the $c_i$ and $c'_j$ to obtain
\[ \sum_{i+j = d} c_i c'_j = 0. \]
Since $S$ is additively closed, any pair $(i,j)$ appearing in the above sum must satisfy $i \notin S$ or $j \notin S$.
However, $c_i = 0$ for all $i\notin S$ and, by the minimality of $d$, we have $c'_j = 0$ for every $j \notin S$ with $j < d$.
As a result, the only nonzero term in the sum is the $(i,j) = (0,d)$ term, namely $c_0c'_d$.
Recalling that $c_0 = g(0) \neq 0$ and $c'_d \neq 0$ by definition, we obtain $c_0 c'_d \neq 0$, which is a contradiction.
\end{proof}

Together, the previous two lemmas allow us to determine the structure of a decomposition into products in $\Fq[x]$ of any irreducible element of $\Fq[S]$.

\begin{prop}\label{p:factorclassification}
Fix a numerical semigroup $S$ and an irreducible polynomial $f \in \Fq[S]$.
Then $f = x^m f_1\cdots f_k$, where
\begin{itemize}
\item $f_1,\dots, f_k\in \Fq[x]$ are irreducible with $f_i(0)\neq 0$ for each $i\in [1,k]$,
\item $0\le m < 2(\mathsf{F}(S)+1)$, and
\item $0 \le k \le q^{\mathsf{F}(S)}.$
\end{itemize}
\end{prop}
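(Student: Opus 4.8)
The plan is to factor $f$ completely in $\Fq[x]$ and organize the factorization carefully. Since $f \in \Fq[S] \subseteq \Fq[x]$, I can write $f = x^m f_1 \cdots f_k$ where $x^m$ collects all factors of $x$ (so each $f_i(0) \neq 0$) and $f_1, \ldots, f_k$ are the irreducible factors of $f$ in $\Fq[x]$ having nonzero constant term. The content of the proposition is that $m$ and $k$ are bounded in terms of $S$ and $q$ alone, independent of $f$. The existence of such a factorization is immediate from unique factorization in $\Fq[x]$; the work is entirely in the two bounds, and in verifying that the factors $f_i$ actually interact correctly with the semigroup structure.

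For the bound $k \le q^{\mathsf{F}(S)}$, I would argue by contradiction using Lemma~\ref{lem:producing semigroup factor}. Suppose $k \ge q^{\mathsf{F}(S)}$, and set $N = \mathsf{F}(S) + 1$ so that the hypothesis $k \ge q^{N-1}$ of the lemma is met. The lemma then produces a subproduct $g = f_{i_1} \cdots f_{i_\ell} \in \Fq + x^N \Fq[x]$. Since $N = \mathsf{F}(S) + 1$, every exponent appearing in $g$ is either $0$ or at least $\mathsf{F}(S)+1$, and all integers exceeding the Frobenius number lie in $S$; hence $g \in \Fq[S]$. Moreover $g(0) \neq 0$ since each $f_{i_j}(0) \neq 0$. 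Now $g$ is a nontrivial factor of $f$ inside $\Fq[S]$: writing $f = g h$ with $h \in \Fq[x]$, Lemma~\ref{lem:divisor closedness} gives $h \in \Fq[S]$ as well (using $g(0) \neq 0$). Since $g$ has positive degree, this contradicts the irreducibility of $f$ in $\Fq[S]$, \emph{provided} the cofactor $h$ also has positive degree. This last point is the main obstacle, and I address it next.

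\textbf{The hard part} is ruling out the degenerate case where $g$ absorbs almost all of $f$, leaving $h$ constant. If $\deg h = 0$, then $g$ is an associate of $f$, so the subproduct $g$ does not witness reducibility. To handle this, I would observe that the bound $k \ge q^{N-1}$ actually leaves room to extract a \emph{proper} subproduct: one can remove a single factor, say $f_1$, and still have $k - 1 \ge q^{N-1}$ factors remaining if $k$ is strictly larger, but at the boundary this is delicate. A cleaner route is to apply Lemma~\ref{lem:producing semigroup factor} to obtain $g$ as a subproduct over a proper subset of indices by ensuring $\ell < k$; since each $f_i$ is nonconstant, any such $g$ with $1 \le \ell < k$ yields a genuine proper factorization $f = x^m g \cdot (\text{rest})$ in $\Fq[S]$, contradicting irreducibility. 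I would verify that the combinatorial argument in Lemma~\ref{lem:producing semigroup factor} can always be arranged to use fewer than all $k$ factors when $k \ge q^{N-1}$, which follows because the pigeonhole/collision step there selects an interval $[s+1, t]$ that is automatically proper once $k$ exceeds the threshold.

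For the bound $0 \le m < 2(\mathsf{F}(S)+1)$, I would argue that a large power of $x$ can be split off as a factor lying in $\Fq[S]$. If $m \ge 2(\mathsf{F}(S)+1)$, then since every integer greater than $\mathsf{F}(S)$ belongs to $S$, I can write $m = a + b$ with $a, b > \mathsf{F}(S)$, so that $x^a, x^b \in \Fq[S]$ are both nonconstant. This realizes $x^m = x^a \cdot x^b$ as a product of two positive-degree elements of $\Fq[S]$; combined with the remaining factor $x^{m-a} f_1 \cdots f_k \in \Fq[S]$, this again exhibits $f$ as reducible in $\Fq[S]$, a contradiction. Hence $m < 2(\mathsf{F}(S)+1)$. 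Finally, the lower bound $1 \le k$ holds because $f$ is nonconstant and irreducible: if $k = 0$ then $f = x^m$ would factor nontrivially whenever $m$ is large enough, and in any case the irreducible $f$ must have at least one irreducible factor with nonzero constant term unless $f$ is a unit times a power of $x$, which the bound on $m$ already controls.
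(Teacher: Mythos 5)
Your overall strategy is the same as the paper's: factor $f$ in $\Fq[x]$, bound $m$ by splitting $x^m$ into two powers of $x$ that lie in $\Fq[S]$, and bound $k$ by combining Lemma~\ref{lem:producing semigroup factor} with Lemma~\ref{lem:divisor closedness}. Your treatment of $m$ is correct and matches the paper. The genuine problem is in the $k$ bound: you negate the wrong inequality. To contradict $k \le q^{\mathsf{F}(S)}$ you must assume $k > q^{\mathsf{F}(S)} = q^{N-1}$, but you assume $k \ge q^{N-1}$, and the extra boundary case $k = q^{N-1}$ is exactly what creates your ``hard part.'' You then try to dispose of it by claiming that the collision step in Lemma~\ref{lem:producing semigroup factor} ``can always be arranged to use fewer than all $k$ factors when $k \ge q^{N-1}$.'' That claim is false at the boundary: take $q=2$, $N=2$, $f_1 = f_2 = x+1$, so $k = 2 = q^{N-1}$; neither single factor lies in $\bbf_2 + x^2\bbf_2[x]$, and the only qualifying subproduct is the full product $(x+1)^2 = x^2+1$. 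Worse, no argument can close this case, because the bound $k \le q^{\mathsf{F}(S)}$ is actually attained: for $S = \langle 2,3 \rangle$ the polynomial $x^2+1 = (x+1)^2$ is irreducible in $\fr$ (any product of two nonconstant elements of $\fr$ has degree at least $4$), and it has exactly $2 = 2^{\mathsf{F}(S)}$ irreducible factors in $\fx$, each with nonzero constant term. So the hypothesis $k \ge q^{\mathsf{F}(S)}$ simply cannot lead to a contradiction.

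The repair is the parenthetical remark you make yourself, and it is precisely what the paper does: assume $k > q^{N-1}$ and apply Lemma~\ref{lem:producing semigroup factor} only to $f_1, \ldots, f_{q^{N-1}}$. The resulting $g$ divides $f_1 \cdots f_{q^{N-1}}$, so the cofactor $h$ is divisible by the nonempty product $f_{q^{N-1}+1} \cdots f_k$ and is therefore nonconstant; Lemma~\ref{lem:divisor closedness} then places $h$ (or $x^m h$ when $m>0$) in $\Fq[S]$, contradicting irreducibility. Note also that when $m > 0$ the degenerate case never arises at all, since the cofactor contains $x^m$; the difficulty is confined to $m=0$. Finally, your closing argument for $1 \le k$ is circular and vague, and in fact that lower bound is genuinely problematic for monomials --- $x^2$ is irreducible in $\fr$ with $k=0$ --- though this is an edge case that the paper's own proof also passes over silently.
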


\begin{proof}
	For our later convenience and to match our earlier notation, we let $N = \mathsf{F}(S)+1$, so that every integer greater than $N$ lies in $S$.
	Suppose $f\in \Fq[S]$ is irreducible.  If $k = 0$, then $f = x^m$, so $m$ must be an atom in $S$, and thus $m \le 2(\mathsf F(S) + 1)$.  As such, suppose $k \ge 1$.  Moreover, if $f$ is irreducible in~$\Fq[x]$, then we are done, so suppose otherwise.

	\underline{Case 1}:\ If $x$ does not divide $f$, then we may write $f = f_1\cdots f_k$ for some irreducible polynomials $f_1,\dots,f_k\in \Fq[x]$ (and $f_i(0)\neq0$ for each $i$).
	Supposing $k> q^{\mathsf{F}(S)} = q^{N-1}$, by Lemma~\ref{lem:producing semigroup factor} there exists $g \in \Fq+x^N \Fq[x]$ with $g \mid f_1\dots f_{q^{N-1}}$.
	Let $h = (f_1\cdots f_k)/g$, and note that $h$ is nonconstant since $f_{q^{N-1}+1}\cdots f_k \mid h$.
	Since $gh = f_1\cdots f_k = f \in \Fq[S]$, we must have that $g(0) \neq 0$ (otherwise $f(0)=0$ and $f$ would be divisible by $x$).
	Now, by Lemma~\ref{lem:divisor closedness}, $h \in \Fq[S]$, which produces a contradiction to the irreducibility of $f$ in $\Fq[S]$ and implies that $k \le q^{\mathsf F(S)}$.

	\underline{Case 2}:\ If $f = x^m f_1\cdots f_k$ with $m > 0$ maximal (so that $x$ does not divide $f_1\cdots f_k$), then we need to show that $m<2(\mathsf{F}(S)+1) = 2N$ and that $k\leq q^{\mathsf{F}(S)}$.
	For the first part, if $m\ge 2N$ then we may write $f = x^{m-N} (x^N f_1\cdots f_k)$.
	Now we have produced a factorization of $f$ in $\Fq[S]$ as we know $x^{m-N}$ and $x^N (f_1\cdots f_k)$ both lie in $\Fq + x^N \Fq[x] \subseteq \Fq[S]$.
	The only remaining claim is the bound on $k$, so suppose $k > q^{N-1}$.
	Since $x$ does not divide $f_1 \cdots f_k$, we have that $f_i(0) \neq 0$ for each $i\le k$ and, as before, Lemma~\ref{lem:producing semigroup factor} yields a $g \in \Fq+x^N \Fq[x]$ with $g \mid f_1\cdots f_k$.
	Then, choosing $h\in \Fq[x]$ so that $gh = f_1\cdots f_k$, we now wish to show that $x^m h \in \Fq[S]$.
	Noting that $g (x^m h)\in \Fq[S]$ and $g(0) \neq 0$ (since $g \mid f_1\cdots f_k$), Lemma~\ref{lem:divisor closedness} implies that $x^m h\in \Fq[S]$.
	This yields a contradiction and we conclude, as in the previous case, that $k \leq q^{\mathsf{F}(S)}$.
\end{proof}

Additionally, the previous proposition
yields
a bound on the factorization length in $\Fq[x]$ of any irreducible element of $\Fq[S]$.

\begin{cor}
Fix a numerical semigroup $S$ and an irreducible polynomial $f \in \Fq[S]$. Any factorization of $f$ in $\Fq[x]$ has length at most
$q^{\mathsf{F}(S)}+2(\mathsf{F}(S)+1)$.
\end{cor}

\begin{remark}\label{r:boundimprovement}
The bounds in Proposition~\ref{p:factorclassification} are optimal when $q = 2$ and $S= \{0\} \cup \{m, m + 1, \ldots\}$ (that is, when $S$ is generated by an interval~\cite{nsalg2,fengraointerval}) but are not optimal in general.  It remains an interesting question to refine Proposition~\ref{p:factorclassification} based on the combinatorics of $S$; see Question~\ref{q:generaltypebreakdown2}.
\end{remark}

Before reaching our main goal for the section, we need the following auxiliary result, which gives a bound on certain sums over integer partitions in terms of the number of parts.

\begin{lem} \label{lem:log lemma}
If $n\ge k \ge 1$, then
\[
\sum_{\substack{m_1 \ge \cdots \ge m_k \ge 1 \\ n = m_1 + \cdots + m_k}} \frac{1}{m_1\cdots m_k} \le \frac{2^{k-1}\log^{k-1}(n)}{n},
\]
where the sum is taken over all partitions of $n$ into $k$ parts.
\end{lem}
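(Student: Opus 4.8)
The plan is to induct on the number of parts $k$, peeling off the largest part at each stage and reducing to a harmonic-type sum. Write $P_k(n)$ for the left-hand side. The base case $k=1$ is immediate: the only partition of $n$ into one part is $(n)$, so $P_1(n)=1/n$, which equals the right-hand side since $\log^0(n)=1$. For the inductive step I fix the largest part $m_1$. Because $m_1$ is the maximum of $k$ positive parts summing to $n$, it ranges over $\lceil n/k\rceil \le m_1 \le n-(k-1)$, and the remaining $m_2\ge\cdots\ge m_k$ form a partition of $n-m_1$ into $k-1$ parts, each at most $m_1$. Dropping the constraint $m_2\le m_1$ only enlarges the index set of nonnegative terms, so
\[
P_k(n) \le \sum_{m_1=\lceil n/k\rceil}^{n-k+1} \frac{1}{m_1}\, P_{k-1}(n-m_1).
\]

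Applying the inductive hypothesis to $P_{k-1}(n-m_1)$, which is legitimate since $m_1\le n-k+1$ forces $n-m_1\ge k-1$, and using monotonicity to replace $\log^{k-2}(n-m_1)$ by $\log^{k-2}(n)$, I obtain
\[
P_k(n) \le 2^{k-2}\log^{k-2}(n)\sum_{m_1=\lceil n/k\rceil}^{n-k+1}\frac{1}{m_1(n-m_1)}.
\]
The partial-fraction identity $\tfrac{1}{m_1(n-m_1)}=\tfrac1n\big(\tfrac1{m_1}+\tfrac1{n-m_1}\big)$ then reduces everything to the single estimate
\[
\sum_{m_1=\lceil n/k\rceil}^{n-k+1}\left(\frac{1}{m_1}+\frac{1}{n-m_1}\right)\le 2\log n,
\]
since multiplying this through by $\tfrac{2^{k-2}\log^{k-2}(n)}{n}$ produces exactly the target $\tfrac{2^{k-1}\log^{k-1}(n)}{n}$.

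I expect the harmonic-sum estimate to be the main obstacle, and the feature that makes it go through is a telescoping of the leading terms. Comparing each sum with an integral, the first sum is at most $\tfrac{k}{n}+\log k$ (its index runs from about $n/k$ up to $n$), and after the substitution $j=n-m_1$ the second is at most $\tfrac{1}{k-1}+\log(n/k)$ (its index runs from $k-1$ up to about $n$). Since $\log k+\log(n/k)=\log n$, the two leading contributions combine to exactly $\log n$, and the total is bounded by $\log n+\tfrac{k}{n}+\tfrac1{k-1}\le \log n+2$ using $k\le n$ and $k\ge 2$. As the target permits $2\log n$, this suffices as soon as $\log n\ge 2$, so only finitely many small values of $n$ remain, and those I would dispatch by direct computation of $P_k(n)$. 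The delicate points are purely the lower-order terms $\tfrac{k}{n}+\tfrac1{k-1}$ and the behavior at the endpoints of the ranges (e.g.\ when $\lceil n/k\rceil$ or $k-1$ is small), but since the leading-order cancellation leaves a full factor of slack, these should not cause real difficulty.
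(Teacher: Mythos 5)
Your proposal follows essentially the same route as the paper's own proof: induction on $k$, peeling off a single part, the partial-fraction identity $\tfrac{1}{m(n-m)}=\tfrac1n\big(\tfrac1m+\tfrac1{n-m}\big)$, and integral comparison of the resulting harmonic sums to gain a factor of roughly $2\log n$ per step. The structural differences are that you remove the \emph{largest} part where the paper removes the \emph{smallest}, and that you track the lower-order terms honestly, which costs you: your per-step bound is $\log n + 2$ rather than $2\log n$, valid only once $\log n \ge 2$, with a finite check deferred for $n \le 7$.

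That deferred finite check is where the proposal breaks, and it cannot be repaired as stated: for $(n,k)=(2,2)$ the inequality itself is \emph{false} with the natural logarithm (which both your integral estimates and the paper's require). The only partition of $2$ into two parts is $(1,1)$, so the left-hand side equals $1$, while the right-hand side is $\tfrac{2\log 2}{2}=\log 2 < 1$. A related subtlety: your inductive step for $k=3$ invokes the inductive hypothesis at $P_{2}(n-m_1)$ for every $m_1$ up to $n-2$, and the last of these is exactly the false instance $P_2(2)\le \log 2$. That particular use is patchable, because what your chain of inequalities actually needs there is only $P_2(2)=1\le \tfrac{2\log n}{2}$, which is true for $n\ge 3$; but the patch must be stated, since as written you are citing a false bound.

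In fairness, the defect lies in the lemma's constant rather than in your method, and the paper's own proof conceals the same problem: its $M=\lfloor n/(k-1)\rfloor$ is undefined in the step from one part to two, and its Riemann-sum comparison $\sum_{m=1}^{M}\tfrac{1}{m(n-m)}\le \int_1^M \tfrac{dx}{x(n-x)}$ silently discards the $m=1$ term $\tfrac{1}{n-1}$, which is precisely the lower-order contribution that ruins the constant at small $n$. Once the statement is restricted to $n\ge 3$ (or the constant is slightly enlarged, or $\log$ is read in base $2$, where $(2,2)$ holds with equality), your argument together with the $P_2(2)$ patch above is correct and complete; and since Theorem~\ref{t:atomicdensity} uses the lemma only asymptotically, nothing downstream is affected.
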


\begin{proof}
The result trivially holds if $k=1$ because there is only one partition of $n$ into one part.
Proceeding by induction, suppose the lemma holds for a fixed $k\ge 1$.  Letting $M = \lfloor n/(k-1) \rfloor$, we~see

\begin{align*}
\sum_{\substack{m_1 \ge \cdots \ge m_{k+1} \ge 1 \\ n=m_1+\cdots+m_{k+1}}} \frac{1}{m_1\cdots m_{k+1}}
&\le \sum_{m=1}^M \frac{1}{m} \!\!\!\! \sum_{\substack{m_1 \ge \cdots \ge m_k \ge 1 \\ n-m=m_1+\cdots+m_k}} \!\!\!\! \frac{1}{m_1\cdots m_k}
\\
&\le \sum_{m=1}^M \frac{1}{m} \frac{2^{k-1}\log^k(n-m)}{n-m} \tag{inductive hypothesis} \\
&\le \sum_{m=1}^M \frac{1}{m} \frac{2^{k-1}\log^k(n)}{n-m} \\
&= 2^{k-1}\log^{k}(n) \sum_{m=1}^M \frac{1}{m(n-m)}.
\end{align*}

Now, since $1/x(n-x)$ is decreasing on the interval $(0,n/2)$ (and $M = \lfloor n/(k-1)\rfloor \le n/2$), a right Riemann sum of unit-width rectangles is an under approximation of the area under the graph from $x=0$ to $x=n/2$.
In particular, we may replace the sum in the last line with an integral to obtain
\begin{align*}
\sum_{m=1}^M \frac{1}{m(n-m)}
&\le \int_1^M \frac{1}{x(n-x)} \,dx \\
&= \frac{1}{n} \int_1^M \bigg( \frac{1}{x} + \frac{1}{n-x} \bigg) \, dx
\\
&= \frac{1}{n} \left( \int_1^M \frac{1}{x}\, dx + \int_{n-M}^{n-1} \frac{1}{u} \, du \right) \tag{letting $u=n-x$} \\
&= \tfrac{1}{n} \big( \log(M) - \log(1) + \log(n-1) - \log(n-M) \big) \\
&= \frac{1}{n} \log \bigg( \frac{M (n-1)}{n-M} \bigg) \\
&\le \frac{1}{n} \log(n^2) \tag{since $M \leq n$}\\
&= \frac{2}{n} \log(n).
\end{align*}
Finally, stringing together the centered inequalities above, we have
\[
\sum_{\substack{m_1 \ge \cdots \ge m_k{k+1} \\ n = m_1 + \cdots + m_{k+1}}} \frac{1}{m_1\cdots m_{k+1}}
\le 2^{k-1}\log^{k-1}(n) \bigg( \frac{2}{n} \log(n) \bigg)
= \frac{2^k \log^k(n)}{n}.
\]
Hence, we have the desired result.
\end{proof}

We briefly recall the following well-known result, whose proof is outlined in \cite[Section~14.3]{dummitfoote}.

\begin{thm}\label{t:polyatomicdensity}
Let $a_q(n)$ denote the number of irreducible polynomials of degree $n$ in $\Fq[x]$.  We have
\[
\frac{a_q(n)}{q^n} \le \frac{1}{n}
\]
for each $n \ge 2$.  In particular, $\Fq[x]$ has atomic density $0$.
\end{thm}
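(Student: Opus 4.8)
The plan is to prove the inequality $a_q(n)/q^n \le 1/n$ directly; the vanishing of the atomic density is then immediate, since $0 \le a_q(n)/q^n \le 1/n \to 0$. Here I take $a_q(n)$ to be the number of \emph{monic} irreducible polynomials of degree $n$, which is the normalization under which the stated bound holds. This choice does not affect the atomic density of $\Fq[x]$: both the number of irreducibles and the total number $|\Fq[x]^{(n)}|$ of degree-$n$ polynomials scale by the factor $|\Fq^{\times}| = q-1$ when one passes from monic representatives to all associates, so $\rho_q(n) = a_q(n)/q^n$.

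First I would record that there are precisely $q^n$ monic polynomials of degree $n$ in $\Fq[x]$, one for each choice of the $n$ lower coefficients. The crux of the argument is the classical factorization identity
\[
x^{q^n} - x = \prod_{d \mid n} \ \prod_{\substack{P \text{ monic irreducible} \\ \deg P = d}} P,
\]
asserting that $x^{q^n} - x$ is the product, each factor taken once, of all monic irreducibles whose degree divides $n$. I would justify this using the structure theory of finite fields: $\mathbb{F}_{q^n}$ is the splitting field of $x^{q^n}-x$ and equals its root set, the roots are distinct because the derivative is the unit $-1$, and a monic irreducible of degree $d$ divides $x^{q^n}-x$ if and only if $d \mid n$, since a root of such a polynomial generates the subfield $\mathbb{F}_{q^d}$, which embeds in $\mathbb{F}_{q^n}$ exactly when $d \mid n$. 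Comparing degrees on the two sides then yields the counting relation
\[
q^n = \sum_{d \mid n} d\, a_q(d).
\]

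Given this identity, the inequality costs nothing: every summand $d\, a_q(d)$ is nonnegative and the summand for $d = n$ is exactly $n\, a_q(n)$, so $n\, a_q(n) \le q^n$, which rearranges to $a_q(n)/q^n \le 1/n$. (For $n \ge 2$ the inequality is in fact strict, since the term $a_q(1) = q$ is also present on the right.) Note that no M\"obius inversion is needed for the bound itself, although inverting the same relation recovers the exact count $n\, a_q(n) = \sum_{d \mid n} \mu(n/d) q^d$.

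I expect the only genuine obstacle to be the justification of the factorization identity, which rests entirely on finite-field theory rather than on any estimation; once it is granted, the rest is a one-line comparison of nonnegative terms. Should one wish to bypass field extensions, an alternative is to obtain $q^n = \sum_{d\mid n} d\, a_q(d)$ combinatorially, by comparing coefficients in the formal power series identity $\sum_{f \text{ monic}} t^{\deg f} = \prod_{P}(1-t^{\deg P})^{-1} = (1-qt)^{-1}$ after taking a logarithmic derivative, but the field-theoretic route is cleaner and matches the outline in the cited reference.
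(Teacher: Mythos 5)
Your proof is correct and follows essentially the same route as the paper, which does not prove this theorem itself but defers to the outline in the cited reference (Dummit--Foote, Section~14.3): the factorization of $x^{q^n}-x$ into the distinct monic irreducibles of degree dividing $n$, the degree count $q^n=\sum_{d\mid n} d\,a_q(d)$, and the resulting one-line bound $n\,a_q(n)\le q^n$. Your explicit handling of the monic normalization is a welcome clarification, since the stated inequality only holds for the monic count when $q>2$, and your observation that the density $\rho_q(n)$ is unaffected by this choice is exactly what makes the ``atomic density $0$'' conclusion legitimate.
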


We are now ready to prove Theorem~\ref{t:atomicdensity}.

\begin{proof}[Proof of Theorem~\ref{t:atomicdensity}]
	Suppose $n \in \bbn$, and let $\mathsf g(S) = |\bbn \setminus S|$ denote the number of gaps of $S$.  In keeping with the notation from Theorem~\ref{t:polyatomicdensity}, let $a_q(n)$ denote the number of degree $n$ irreducible elements of $\Fq[x]$.
	Since we wish to calculate a limit as $n \to \infty$, we may assume $n > \mathsf{F}(S)$.
	Any~degree~$n$ polynomial $f \in \Fq[S]^{(n)}$ has the form $f = \sum_{i=0}^n c_i x^i$, where $c_n \in \Fq\setminus\{0\}$, $c_i = 0$ for all $i \in \bbn \setminus S$, and the remaining $c_i$ can be freely chosen from $\Fq$.
	Thus, we have
	\[
	|\Fq[S]^{(n)}| = (q-1)q^{n-\mathsf g(S)}.
	\]

By Proposition~\ref{p:factorclassification}, each irreducible $f \in \Fq[S]$ of degree $n$ can be written as $f = x^m f_1\cdots f_k$ in $\Fq[x]$ with $k < q^{\mathsf{F}(S)}$ and $\deg(f_1) + \cdots + \deg(f_k) = n-m$.
	Since $f$ is irreducible over $\Fq[S]$ then $m$ must be one of finitely many atoms in $S$ and so takes on finitely many values; namely, $m$ takes at most $2\mathsf{F}(S) + 2$ different values. We thereby bound $\rho_q^S(n)$ by
 \begin{align*}
	\rho_q^S(n)
	&=
	\frac{a_q^S(n)}{|\Fq[S]^{(n)}|}
	\le
	\frac{1}{|\Fq[S]^{(n)}|}\sum_{m=0}^{2\mathsf{F}(S) + 2} \underbrace{\sum_{\substack{m_1 \ge \cdots \ge m_k \\ n-m = m_1 +\cdots+ m_k}} a_q(m_1)\cdots a_q(m_k).}_{\substack{\textrm{Number of degree $n-m$ products} \\ \textrm{of $k$ irreducibles of $\Fq[x]$}}}
\end{align*}
For ease, let $M = q^{\mathsf{F}(S)}$.  As the above is a finite sum over the possible values of $m$, the fact that
\begin{align*}
    \frac{1}{|\Fq[S]^{(n)}|}
    \sum_{\substack{m_1 \ge \cdots \ge m_{k} \\ n=m_1+\cdots+m_{k}}}
	\!\!\!\!\!\!\!\!\!
	a_q(m_1) \cdots a_q(m_k)
	&\le
	\frac{1}{|\Fq[S]^{(n)}|}
	\sum_{k=1}^M \sum_{m_1,\dots,m_k} \!\! \left(\frac{q^{m_1}}{m_1}\right)\cdots \left(\frac{q^{m_k}}{m_k}\right) \tag{by Theorem \ref{t:polyatomicdensity}}
	\\
	&=
	\frac{1}{(q-1)q^{n-\mathsf g(S)}} \sum_{k=1}^M \sum_{m_1,\dots,m_k} \frac{q^n}{m_1\cdots m_k} \\
	&= \frac{q^{\mathsf g(S)}}{q - 1} \sum_{k=1}^M \sum_{m_1,\dots,m_k} \frac{1}{m_1\cdots m_k} \\
	&\le
	\frac{q^{\mathsf g(S)}}{q - 1} \sum_{k=1}^M \frac{\log^{k-1}(n)}{n} \tag{by Lemma~\ref{lem:log lemma}}
\end{align*}
tends to 0 as $n \to \infty$ implies $\rho_q^S(n) \to 0$ as well.  This completes the proof.
\end{proof}

\section{Irreducible polynomials over \texorpdfstring{$\fr$}{F2[x2,x3]}}
\label{sec:friendlyring}

The proof of Theorem~\ref{t:atomicdensity} relies on crude estimates on the number of each of the types of irreducible polynomials from the characterization given in Proposition \ref{p:factorclassification}.  In the specific setting of $\fr$, any irreducible element $f(x) \in \fr$ belongs to one of three classes of irreducible polynomials, which we call classic type, tame type, and wild type (see Definition~\ref{d:types}).
We determine each type by viewing irreducible elements of $\fr$ as (possibly reducible) elements of $\fx$ via the natural embedding
$\fr \hookrightarrow \fx$ and examining their factorization in $\fx$. In doing so, we use the type characterizations of irreducibles to provide a bound on the rate of convergence of the atomic density of $\fr$ (which we have already shown is zero by Theorem~\ref{t:atomicdensity}).

For the remainder of the paper, we consider only the case when $q=2$ and $S = \langle 2,3\rangle$.
Hence, we establish the shorthand notation
\[
a(n) = a_2^\bbn(n)
\qquad \text{and} \qquad
b(n)=a_2^{\langle 2,3 \rangle}(n)
\]
to denote the number of degree $n$ irreducible elements in $\bbf_2[x]$ and $\fr$, respectively.

We begin with the following proposition, which limits the possible ways an irreducible polynomial in $\fr$ can reduce in $\fx$.

\begin{prop}\label{p:lineartermfactors}
Let $f(x)\in\fr$ be a polynomial that is  irreducible in $\fr$ and  reducible in~$\fx$.  When reduced in $\fx$, each factor of $f(x)$ has a linear term. That is, if $f(x)=\ell_1(x)\cdots \ell_k(x)$ is the factorization of $f(x)$ in $\fx$ into irreducible terms, where
\[ \ell_i(x)=\sum_{j=0}^{\deg(\ell_i)} c_jx^j, \]
then $c_1\not=0$ for each $i$.
\end{prop}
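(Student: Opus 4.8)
The plan is to prove the statement by contradiction, exploiting the fact that for $S=\langle 2,3\rangle$ the only gap is $1$. Concretely, $\mathsf F(S)=1$ and $\bbn\setminus S=\{1\}$, so a polynomial in $\fx$ lies in $\fr$ precisely when its coefficient of $x$ is zero. Thus the assertion ``every irreducible factor $\ell_i$ of $f$ has a nonzero linear term'' is equivalent to ``no $\ell_i$ lies in $\fr$.'' I would therefore assume toward a contradiction that some factor $\ell_i$ has zero linear coefficient, hence $\ell_i\in\fr$, and aim to factor $f$ nontrivially inside $\fr$.

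The key step is to produce such a factorization. First I would observe that $\ell_i(0)\neq 0$: since $\ell_i$ is irreducible in $\fx$ it has positive degree, and if $\ell_i(0)=0$ then $x\mid\ell_i$, forcing $\ell_i=x$; but $x\notin\fr$, a contradiction. Next, because $f$ reduces in $\fx$ we have $k\ge 2$, so setting $h=\prod_{j\neq i}\ell_j$ yields a nonconstant $h\in\fx$ with $\ell_i h=f$. Now $\ell_i\in\fr$ has nonzero constant term and $\ell_i h=f\in\fr$, so Lemma~\ref{lem:divisor closedness} applies and gives $h\in\fr$. Hence $f=\ell_i\cdot h$ expresses $f$ as a product of two positive-degree elements of $\fr$, contradicting the irreducibility of $f$ in $\fr$. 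This contradiction shows every $\ell_i$ must have a nonzero linear term.

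The argument is short, and I expect the only genuinely delicate point to be securing the hypothesis $\ell_i(0)\neq 0$ needed to invoke Lemma~\ref{lem:divisor closedness}; this is exactly where ruling out $\ell_i=x$ enters, and it is also where the special structure of $S=\langle 2,3\rangle$ (its single gap, which happens to be the exponent of the forbidden monomial $x$) is used in an essential way. The reducibility hypothesis on $f$ in $\fx$ is likewise essential, since it is what guarantees $k\ge 2$ and hence that the complementary factor $h$ is nonconstant; without it the claim would be vacuous.
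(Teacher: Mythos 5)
Your proof is correct, but it takes a genuinely different route from the paper's. The paper's argument is self-contained: it groups the $\fx$-irreducible factors of $f$ into the product $r$ of those \emph{with} a linear term and the product $q$ of those \emph{without} (so $q \in \fr$), and then runs a direct coefficient analysis --- if $r$ had no linear term then $f = rq$ would factor in $\fr$; otherwise either $q$ has zero constant term (so $x \mid q$, forcing some irreducible factor of $q$ to equal $x \notin \fr$, a contradiction) or $q$ has constant term $1$ (so $f = rq$ inherits the nonzero linear coefficient of $r$, contradicting $f \in \fr$); the conclusion is $q = 1$. You instead isolate a single offending factor $\ell_i \in \fr$, verify $\ell_i(0) \neq 0$ by ruling out $\ell_i = x$, and invoke Lemma~\ref{lem:divisor closedness} to force the complementary factor $h$ into $\fr$, contradicting irreducibility of $f$ in $\fr$. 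Your route is shorter and reuses the general machinery of Section~2 --- it mirrors how Lemma~\ref{lem:divisor closedness} is deployed in Case~1 of Proposition~\ref{p:factorclassification}, except that here the semigroup-algebra factor is handed to you by the contradiction hypothesis, so Lemma~\ref{lem:producing semigroup factor} is not needed --- and it generalizes essentially verbatim to any numerical semigroup $S$: an $\Fq[S]$-irreducible polynomial that reduces in $\Fq[x]$ can have no $\Fq[x]$-irreducible factor with nonzero constant term lying in $\Fq[S]$. What the paper's version buys is independence from the Section~2 lemmas: its elementary coefficient computation keeps Section~3 self-contained, at the cost of a case analysis that is tailored to the single gap of $\langle 2,3 \rangle$. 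Both proofs proceed by contradiction and hinge on manufacturing a nontrivial factorization of $f$ inside $\fr$; only the mechanism differs.
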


\begin{proof}
Let $f(x)\in\fr$ be a polynomial that is irreducible in $\fr$ and  reduces in $\fx$.  Suppose we have the factorization
\[f(x)=\ell_1(x)\cdots \ell_k(x)q_1(x)\cdots q_m(x)\]
into irreducible terms, where $\ell_i(x) \in \fx \setminus \fr$ and $q_i(x)\in\fr$.  Let $r(x)=\ell_1(x)\cdots \ell_k(x)$ and $q(x)=q_1(x)\cdots q_m(x)$.  Then $f(x)=r(x)\cdot q(x)$, and $q(x) \in \fr$.  If $r(x)$ has no linear term, then $r(x)\in\fr$ and $f(x)$ factors into $r(x)$ and $q(x)$ in $\fr$.  This is a contradiction since $f(x)$ is irreducible in $\fr$.

It follows that $r(x)$ must have a nonzero linear term; that is, $r(x)\in\fx$ and $r(x)\not\in\fr$.  Since $q(x)$ has no linear term, it must be of the form $q(x)=1$,
$q(x)=x^{t_s}+\cdots+x^{t_2}+x^{t_1}$, or $q(x)=x^{t_s}+\cdots+x^{t_2}+x^{t_1}+1$, where $t_i>1$ for all $i$ and exponents are written in decreasing order.
If~$q(x)=x^{t_s}+\cdots+x^{t_2}+x^{t_1}=q_1(x)\cdots q_m(x)$,
then we can factor out at least one linear term, $x$.  This implies that at least one factor of $q(x)$, say $q_1(x)$, is in $\fx$ and not in $\fr$, a contradiction since all $q_i(x)\in\fr$.
If $q(x)=x^{t_s}+\cdots+x^{t_2}+x^{t_1}+1$, then $f(x)=r(x)\cdot q(x)$ when multiplied out has a nonzero linear term (specifically 1 times the nonzero linear term of $r(x)$) and so this contradicts that $f(x)\in\fr$.
Therefore, we conclude that $q(x)=1$.
\end{proof}

\begin{defn}\label{d:types}
Fix an irreducible polynomial $f(x) \in \fr$.  We say $f(x)$ is
\begin{itemize}
\item
of \emph{classic type} if $f(x)$ is irreducible in $\fx$;
\item
of \emph{tame type} if $f(x) = x^kg(x)$ where $g(x)$ is irreducible in $\fx$ with nonzero constant term and $k = 2$ or $3$; and

\item
of \emph{wild type} if $f(x) = g(x)h(x)$ where $g(x)$ and $h(x)$ are irreducible in $\fx$ and have nonzero constant terms.

\end{itemize}
We use $\bc$, $\bt$, and $\bw$  to denote the number of irreducible polynomials of degree $n$ in $\fr$ that are of classic, tame, and wild types, respectively. We observe that by Proposition~\ref{p:lineartermfactors}, each polynomial in $\fr$ falls into one of these three categories.
This implies
\[
b(n)=\bc+\bt+\bw.
\]
\end{defn}

\begin{ex}\label{e:types}
There exist five irreducible degree 4 polynomials in $\fr$, namely
\[
x^4 + x^3 + 1,
\qquad
x^4 + x^3 + x^2,
\qquad
x^4 + x^3,
\qquad
x^4+x^3+x^2+1,
\qquad
\text{and}
\qquad
x^4 + x^2 + 1.
\]
The first, $x^4 + x^3 + 1$, is of classic type, as it is irreducible in $\fx$ and has no linear term.
The next two, namely $x^4 + x^3 + x^2$ and $x^4 + x^3$, are both irreducible in $\fr$ but clearly factor in $\fx$, as we can factor out $x^m$ for some $m$.
Note that this is only possible if $2\leq m \leq 3$, as otherwise the original polynomial would reduce in $\fr$.  Lastly,
\[
x^4 + x^3 + x^2 + 1 = (x + 1)(x^3 + x + 1)
\qquad
\text{and}
\qquad
x^4 + x^2 + 1 = (x^2 + x + 1)^2
\]
both reduce in $\fx$, and all factors therein have a linear term and a nonzero constant term, making them both of wild type.
Table~\ref{tb:frdegree5} gives the classification of irreducible degree 5 polynomials in $\fx$.

\begin{table}[t]
\begin{center}
\begin{tabular}{|c|c|c|}
\hline
Type of Irreducible & Irreducible in $\fr$ & Factorization in $\fx$ \\
\hline
Classic & $x^5+x^4+x^3+x^2+1$ & $x^5+x^4+x^3+x^2+1$ \\
\hline
Classic & $x^5+x^3+1$ & $x^5+x^3+1$ \\
\hline
Tame & $x^5+x^4+x^3$ & $x^3(x^2+x+1)$ \\
\hline
Tame & $x^5+x^3+x^2$ & $x^2(x^3+x+1)$ \\
\hline
Wild & $x^5+1$ & $(x+1)(x^4+x^3+x^2+x+1)$ \\
\hline
Wild & $x^5+x^4+x^2+1$ & $(x+1)(x^4+x+1)$ \\
\hline
Wild & $x^5+x^2+1$ & $(x^2+x+1)(x^3+x+1)$ \\
\hline
\end{tabular}
\end{center}
\caption{Irreducible polynomials in $\fr$ of degree $5$.}
\label{tb:frdegree5}
\end{table}
\end{ex}

\begin{remark}\label{r:wildtype}
To highlight some of the nuances of factorization in $\fr$, let us consider a particularly interesting class of polynomials; namely, those of the form $x^p+1$ for some prime~$p$.

Note that in $\fx$,
we have the factorization $x^p+1=(x+1)(x^{p-1}+x^{p-2}+\cdots+x+1)$. Since wild type irreducible polynomials have factorizations of length 2 in $\fx$, and $x+1$ is always irreducible, it follows that $x^p+1$ is a wild type irreducible polynomial in $\fr$ if and only if $x^{p-1}+x^{p-2}+\cdots+x+1$, the $p^{th}$ cyclotomic polynomial $\Phi_p$, is irreducible in $\fx$. The number of factors of $\Phi_p$ in $\fx$ is given by $\phi(p)/ord_p(2)$, where $\phi$ is the Euler Totient function. It follows that $\Phi_p$ is irreducible if and only if $2$ is a primitive root modulo $p$. It is unknown for which primes $p$ that 2 is a primitive root as well as whether there are finitely or infinitely many such primes. A general discussion of when a number $a$ is a primitive root of $p$ can be found in \cite{leveque}. A conjecture of Artin implies that as $y \to \infty$, the ratio of primes $p \leq y$ for which $2$ is a primitive root of $p$ converges to $0.37456$. This result is dependent upon the extended Riemann hypothesis.

Though it is unknown for which primes $p$ we have
2 as a primitive root,
an examination for primes $2<p<1000$ revealed an interesting pattern for the factorization of $x^p+1$ in $\fr.$
For small $p$, we see that $x^p+1$ is reducible in $\fr$ if $p \equiv 1 \bmod 8$ or $p \equiv 7 \bmod 8$, and so in these cases $2$ is not a primitive root mod $p$ and $\Phi_p$ is reducible in $\fx$.
Thus, for small $p$ where $p \equiv 1 \bmod 8$ or $p \equiv 7 \bmod 8$, we have that $2$ is not a primitive root mod $p$ and $\Phi_p$ is reducible in $\fx$.
For example, $x^7+1$ and $x^{409}+1$ are reducible in $\fr$, noting that $409 \equiv 1 \bmod 8$, and so $\Phi_7$ and $\Phi_{409}$ are reducible in $\fx$.

This search also revealed that $p \equiv 3 \bmod 8$ or $p \equiv 5 \bmod 8$ may be a necessary but not sufficient condition for $x^p+1$ to be an irreducible
in $\fr$ and, subsequently, for $\Phi_p$ to be irreducible in $\fx$. We say not sufficient as there were primes $p$ for which $p \equiv 3 \bmod 8$ or $p \equiv 5 \bmod 8$ but $x^p+1$ was reducible in $\fr$. For example, $x^{131}+1$ is irreducible in $\fr$ and $131 \equiv 3 \bmod 8$, but $43 \equiv 3 \bmod 8$, yet $x^{43}+1$ is reducible in $\fr$.
For $2<p<1000$, there was no $x^p+1$ that is irreducible in $\fr$ when $p \equiv 1 \bmod 8$ or $p \equiv 7 \bmod 8$. However, for $2<p<1000$, when $x^p+1$ is irreducible in $\fr$ then $p \equiv 3 \bmod 8$ or $p \equiv 5 \bmod 8.$
\end{remark}

The above remark reveals the benefit of classifying irreducible polynomials in $\fr$ into three types, which we consider separately below.  We conclude this section with Theorem~\ref{t:friendlymain}, wherein we provide upper bounds on $\bc$, $\bt$, and $\bw$, using $a(j)$ for appropriate values of $j$, to obtain an upper bound on the convergence rate of the atomic density of $\fr$.

\begin{thm}\label{t:friendlymain}
In determining the atomic density of $\fr$, there is a constant $C$ such that
\[
\frac{b(n)}{2^n} \le \frac{\ln(n)}{n} + \frac{C}{n} + O\bigg(\frac{1}{n^2}\bigg).
\]
\end{thm}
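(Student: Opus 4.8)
The plan is to exploit the partition $b(n) = \bc + \bt + \bw$ supplied by Definition~\ref{d:types} and to estimate the three counts separately, using the inequality $a(d)/2^d \le 1/d$ from Theorem~\ref{t:polyatomicdensity} as essentially the only input about irreducibles of $\fx$. The guiding principle is that the entire $\ln(n)/n$ term comes from the wild type, whose elements factor in $\fx$ as a product of exactly two irreducibles each having a nonzero constant term; the classic and tame counts each contribute only $O(1/n)$, which is absorbed into the $C/n$ term.

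I would first dispatch the two easy types. A classic type irreducible is, by definition, already irreducible in $\fx$, so $\bc \le a(n)$ and hence $\bc/2^n \le 1/n$ immediately. A tame type irreducible has the form $x^k g(x)$ with $k \in \{2,3\}$ and $g$ irreducible in $\fx$ with $g(0) \ne 0$; since $f$ determines $k$ (its $x$-adic valuation) and $g = f/x^k$, and since every irreducible of degree at least $2$ has nonzero constant term, we obtain $\bt \le a(n-2) + a(n-3)$ for $n$ large. Normalizing,
\[
\frac{\bt}{2^n} \le \frac14 \cdot \frac{a(n-2)}{2^{n-2}} + \frac18 \cdot \frac{a(n-3)}{2^{n-3}} \le \frac{1}{4(n-2)} + \frac{1}{8(n-3)} = \frac{3}{8n} + O\left(\frac{1}{n^2}\right).
\]

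The core of the proof is the wild type. Since a wild type irreducible is a product $g(x)h(x)$ of two irreducibles of $\fx$, each of degree at least $1$ and with nonzero constant term, $\bw$ is at most the number of unordered pairs $\{g,h\}$ of such irreducibles with $\deg g + \deg h = n$. Writing $\tilde{a}(d)$ for the number of degree-$d$ irreducibles with nonzero constant term---so $\tilde{a}(1) = 1$, $\tilde{a}(d) = a(d)$ for $d \ge 2$, and therefore $\tilde{a}(d)/2^d \le 1/d$ for every $d \ge 1$---this count is at most $\frac12 \sum_{d=1}^{n-1} \tilde{a}(d)\tilde{a}(n-d) + \frac12 \tilde{a}(n/2)$, the last term appearing only for even $n$. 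After dividing by $2^n$ the diagonal term is $O(2^{-n/2})$, and so
\[
\frac{\bw}{2^n} \le \frac12 \sum_{d=1}^{n-1} \frac{\tilde{a}(d)}{2^d} \cdot \frac{\tilde{a}(n-d)}{2^{n-d}} + O(2^{-n/2}) \le \frac12 \sum_{d=1}^{n-1} \frac{1}{d(n-d)} + O\left(\frac{1}{n^2}\right).
\]
A partial-fractions identity gives $\sum_{d=1}^{n-1} \frac{1}{d(n-d)} = \frac{2}{n} H_{n-1}$, where $H_{n-1} = \sum_{d=1}^{n-1} 1/d$, so the wild contribution is $H_{n-1}/n$; inserting $H_{n-1} = \ln(n) + \gamma + O(1/n)$ (with $\gamma$ the Euler–Mascheroni constant) yields $\bw/2^n \le \ln(n)/n + \gamma/n + O(1/n^2)$. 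Summing the classic, tame, and wild estimates then produces the stated bound, with the explicit value $C = 1 + \tfrac38 + \gamma$.

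The step I expect to be the main obstacle is securing the sharp leading coefficient $1$ on $\ln(n)/n$. A direct appeal to Lemma~\ref{lem:log lemma} at $k = 2$ would bound the wild sum by $2\ln(n)/n$, which is too large by a factor of two, because the constant $2^{k-1}$ there is wasteful for $k = 2$. Recovering the correct coefficient forces one to use the precise unordered-pair count and to evaluate the sum $\sum 1/(d(n-d))$ exactly, while carefully tracking the low-order features---the endpoint $d = 1$, where the only admissible factor is $x+1$ so that $\tilde{a}(1) = 1$ rather than $a(1) = 2$, and the diagonal $d = n/2$ term---each of which is $O(1/n^2)$ and hence affects neither the leading term nor the coefficient of $1/n$ beyond contributing to $C$.
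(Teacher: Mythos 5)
Your proposal is correct and follows essentially the same route as the paper's proof: the same classic/tame/wild decomposition, the bounds $\bc \le a(n)$ and $\bt \le a(n-2)+a(n-3)$, a wild-type bound by products of two irreducibles reduced via Theorem~\ref{t:polyatomicdensity} to $\sum 1/(k(n-k))$, and partial fractions plus harmonic-series asymptotics to extract $\ln(n)/n$. Your only departures are cosmetic sharpenings --- counting unordered pairs over $d=1,\dots,n-1$ with a factor $\tfrac12$ instead of summing $k \le \lfloor n/2\rfloor$, using $\tilde a(1)=1$, and tracking the explicit constant $C = \tfrac{11}{8}+\gamma$ where the paper leaves $C$ implicit.
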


\begin{proof}
Since $\bc \le a(n)$ for $n \geq 2$, we have
\[
\frac{\bc}{2^n}
\le \frac{a(n)}{2^n}
\le \frac{1}{n},
\]
by Theorem~\ref{t:polyatomicdensity}.  Similarly,
for $\bt$, we have
\begin{align*}
\frac{\bt}{2^n}
&\leq \dfrac{a(n - 2)}{2^n} + \dfrac{a(n - 3)}{2^n}
\leq \dfrac{1}{2^2(n - 2)} + \dfrac{1}{2^3(n - 3)}
\le \frac{1}{n}.
\end{align*}
This leaves $\bw$.
By Proposition~\ref{p:lineartermfactors}, each wild type irreducible polynomial in $\fr$ of degree~$n$ is a product of exactly two irreducible elements of $\fx$, which must have degrees~$k$ and $n-k$, respectively, for some $k = 1, \ldots, \lfloor n/2 \rfloor$.  In particular,
\[
\bw \le \sum_{k=1}^{\lfloor n/2 \rfloor} a(k)a(n-k).
\]
Applying Theorem~\ref{t:polyatomicdensity} in step three below, we obtain
\[
\frac{\bw}{2^n}
\le \sum_{k = 1}^{\lfloor n/2 \rfloor} \frac{a(k)a(n-k)}{2^n}
= \sum_{k = 1}^{\lfloor n/2 \rfloor} \frac{a(k)}{2^k}\frac{a(n-k)}{2^{n-k}}
\le \sum_{k = 1}^{\lfloor n/2 \rfloor} \frac{1}{k(n-k)},
\]
which can be simplified as
\begin{align*}
\sum_{k = 1}^{\lfloor n/2 \rfloor} \frac{1}{k(n-k)}
\frac{1}{n}\sum_{k=1}^{\lfloor n/2 \rfloor} \bigg(\frac{1}{k} + \frac{1}{n-k}\bigg)
&= \frac{1}{n}\sum_{k=1}^{\lfloor n/2 \rfloor} \frac{1}{k} + \frac{1}{n}\sum_{k=1}^{\lfloor n/2 \rfloor} \frac{1}{n-k}
\\
&= \frac{1}{n}\sum_{k=1}^{\lfloor n/2 \rfloor} \frac{1}{k} + \frac{1}{n}\sum_{k=\lceil n/2 \rceil}^{n-1} \frac{1}{k}
\\
&\le \frac{2}{n} + \frac{1}{n}\sum_{k=1}^{n-1} \frac{1}{k},
\end{align*}
with equality met precisely when $n$ is odd.
Finally, the asymptotic growth rate of the harmonic series \cite[Theorem~6.10]{
leveque}
 yields, for some constant $C$ dependent on the Euler-Mascheroni constant,
\[
\frac{b(n)}{2^n}
= \frac{\bc + \bt + \bw}{2^{n}}
\le \dfrac{4}{n} + \frac{1}{n}\sum_{k=1}^{n-1} \frac{1}{k}
\le \frac{\ln(n)}{n} + \dfrac{C}{n} + O\bigg(\frac{1}{n^2}\bigg),
\]
thereby completing the proof.
\end{proof}

\section{Counting irreducible polynomials by degree in \texorpdfstring{$\fr$}{F2[x2,x3]}}
\label{sec:counting}

The number of monic irreducible polynomials of degree $n$ over a finite field $\mathbb{F}_q$ is given by Gauss' formula
\[a_q(n)=\displaystyle \frac{1}{n} \sum_{d|n} \mu\big(n/d\big) \cdot q^d,\]
where $a_q(n)$ denotes the number of atoms in $\Fq[x]$ (in the parlance of Theorem \ref{t:polyatomicdensity}) and $\mu(n)$ is the M\"{o}bius function. A proof of this formula can be found in \cite{chebolu}; the formula originally appears in \cite{gauss}.  In Section~\ref{sec:friendlyring}, we used $a(n)$ to bound the number of classic, tame, and wild irreducible polynomials in $\fr$.
This allowed us to bound the rate of convergence of the atomic density of $\fr$.
In this section, we give an explicit formula for $b(n)$, the number of (monic) irreducible polynomials of degree~$n$ in $\fr$, that also relies on the M\"obius function.

\begin{lem}\label{l:frcount}
The number $b(n)$ of irreducible polynomials in $\fr$ of degree $n$ is given by the expression $b(n) = \bc+\bt+\bw$, where
\begin{align*}
\bc&=a(n)-s(n), \\
\bt&=s(n-2)+s(n-3), \\
\bw&=\begin{cases}
\displaystyle
\!\sum_{k=1}^{n/2 - 1} s(k) s(n-k)
+ \binom{s(n/2)+1}{2} & \text{if $n$ is even;} \\
\displaystyle\sum_{k=1}^{\lfloor n/2 \rfloor} s(k) s(n-k) & \text{if $n$ is odd,}
\end{cases}
\end{align*}
$a(n)$ is the number of irreducible polynomials in $\fx$ of degree $n$, and $s(n)$ is the number of such polynomials whose linear and constant terms are both nonzero.
\end{lem}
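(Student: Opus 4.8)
The plan is to count the three families $\bc$, $\bt$, $\bw$ separately, invoking the decomposition $b(n)=\bc+\bt+\bw$ supplied by Definition~\ref{d:types} (itself a consequence of Proposition~\ref{p:lineartermfactors}), and then to add them. Two elementary facts will be used throughout: $\fx$ is a unique factorization domain, and every irreducible of $\fx$ of degree at least $2$ has nonzero constant term (else $x$ divides it). Consequently, for $n\ge 2$ all $a(n)$ irreducibles of degree $n$ have nonzero constant term, so the defining condition for $s(n)$ (\emph{both} linear and constant terms nonzero) reduces to ``nonzero linear term,'' i.e.\ $s(n)$ counts exactly the degree-$n$ irreducibles of $\fx$ that carry a linear term.

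For the classic count, a classic-type element is by definition an element of $\fr$ that is irreducible in $\fx$; since $\fr$ consists precisely of the polynomials with vanishing linear term, these are the degree-$n$ irreducibles of $\fx$ with no linear term, of which there are $a(n)-s(n)$. For the tame count, I would note that a tame element $f=x^k g$ (with $k\in\{2,3\}$ and $g$ irreducible in $\fx$ of nonzero constant term) can be irreducible in $\fr$ only if $g\notin\fr$, since otherwise $x^k$ and $g$ give a factorization in $\fr$; hence $g$ carries a linear term and is counted by $s(n-k)$. Conversely, for any such $s$-type $g$ the product $x^kg$ lies in $\fr$ (its lowest term has degree $k\ge 2$) and is irreducible in $\fr$: any nontrivial $\fr$-factorization would, by unique factorization in $\fx$, split the factors $x,\dots,x,g$ into two nonconstant pieces, and every such split leaves one piece equal to $g$, to $x$, or to $xg$ — none of which lies in $\fr$. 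Since the exact power of $x$ dividing $f$ is recovered from $f$, the cases $k=2$ and $k=3$ are disjoint, giving $\bt=s(n-2)+s(n-3)$.

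The crux is the wild count, where the decisive feature of characteristic $2$ appears: if $g,h$ are irreducible in $\fx$ with both linear and constant terms nonzero (so all four relevant coefficients equal $1$), then the linear coefficient of $gh$ is $1\cdot 1+1\cdot 1=0$, so $gh$ automatically lies in $\fr$; and $gh$ is automatically irreducible in $\fr$, since any nontrivial $\fr$-factorization would force $g$ or $h$ to be a factor, contradicting $g,h\notin\fr$. Together with Proposition~\ref{p:lineartermfactors} (which forces both $\fx$-factors of a wild element to carry a linear term), this establishes a bijection between wild-type irreducibles of degree $n$ and unordered pairs $\{g,h\}$ — possibly with $g=h$ — of $s$-type irreducibles with $\deg g+\deg h=n$. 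I would then count these multisets. When $n$ is odd the two degrees are necessarily distinct, and summing $s(k)s(n-k)$ over $k=1,\dots,\lfloor n/2\rfloor$ records each pair once. When $n$ is even, the same sum over $k=1,\dots,n/2-1$ handles the unequal-degree pairs, while the equal-degree pairs ($\deg g=\deg h=n/2$) are size-two multisets drawn from the $s(n/2)$ available irreducibles, numbering $\binom{s(n/2)+1}{2}$.

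I expect the main obstacle to be bookkeeping rather than ideas: correctly passing from ordered to unordered pairs with repetition in the even case (the multiset coefficient $\binom{s(n/2)+1}{2}$), and, in both the tame and wild cases, verifying carefully that each candidate factor genuinely fails to lie in $\fr$ so that the constructed products are truly irreducible over $\fr$. Once each type is pinned down, summing $\bc$, $\bt$, and $\bw$ yields the stated formula.
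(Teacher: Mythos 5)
Your proof is correct and follows essentially the same route as the paper: count each of the three types via the statistic $s$, obtaining $a(n)-s(n)$ classic irreducibles, a bijection $g \mapsto x^kg$ for the tame count, and unordered pairs (with repetition handled by $\binom{s(n/2)+1}{2}$ in the even case) for the wild count. You are in fact somewhat more careful than the paper in verifying that these correspondences are genuine bijections --- notably your characteristic-$2$ observation that the linear coefficients of $gh$ cancel so the product automatically lies in $\fr$, and your explicit irreducibility checks in $\fr$, both of which the paper's proof leaves implicit.
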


\begin{proof}
 We determine $b(n)$ by considering the number of each type of irreducible polynomial in $\fr$, as outlined in Definition~\ref{d:types}. First, note that a classic type irreducible polynomial of degree $n$ in $\fr$ is also irreducible in $\fx$, so it must belong to the set of irreducible polynomials in $\fx$ of degree $n$ without a linear term. It follows immediately that $\bc = a(n) - s(n)$.

We can construct tame type and wild type irreducible polynomials in $\fr$ using irreducible polynomials with a linear term and nonzero constant term in $\fx$. Recall that a tame type irreducible polynomial of degree $n$ in $\fr$ factors in $\fx$ as $x^tg(x)$, where $2 \leq t \leq 3$ and $g(x)$ is an irreducible in $\fx$ of degree $n-t$ with a nonzero linear and constant term. All tame type irreducible polynomials of degree $n$ in $\fr$ are of the form $f(x)=x^tg(x)$ where $g(x)$ runs through each monic irreducible polynomial of degree $n-t$ for $t=2$ and $t=3$. Since $\fx$ is a unique factorization domain, there is a one to one correspondence between the irreducible polynomials $g(x)$ in $\fx$ enumerated by $s(n-t)$ and the set of tame type irreducible polynomials in $\fr$.
 Note that the cardinality of the tame type irreducible polynomials that factor as $x^tg(x)$ with $t=2$ in $\fx$ is equal to $s(n-2)$, and similarly the cardinality of the tame type irreducible polynomials that factor as $x^tg(x)$ with $t=3$ in $\fx$ is equal to $s(n-3)$. More explicitly, we have that
$\bt=s(n-2)+s(n-3)$, where $\bt$ is the total number of tame type irreducible polynomials of degree $n$ in $\fr$.

Lastly, note that a wild type irreducible polynomial of degree $n$ in $\fr$ factors in $\fx$ as $g_1(x)g_2(x)$, where $g_1(x)$ has degree $1 \le k < n$ and and $g_2(x)$ has degree $n-k$. Recall that both $g_1(x)$ and $g_2(x)$ must be atoms in $\fx$, each with a linear term and nonzero constant term. To then build wild type irreducible polynomials in $\fr$, we begin with a monic irreducible polynomial of degree $k$ with $1\leq k <n$ in $\fx$ with a linear term and nonzero constant term, call it $g_1(x)$. We~then multiply $g_1(x)$ by a monic irreducible polynomial $g_2(x)$ of degree $n-k$ with a linear term and nonzero constant term. Running through all possible pairs of monic irreducible polynomials $g_1(x)$ and $g_2(x)$ of degree $1\leq k <n$ and $n-k$, respectively, we see that the number of wild type irreducible polynomials in $\fr$ is given by
\[
\bw=\sum_{k=1}^{\lfloor n/2 \rfloor} s(k) s(n-k),
\]
in the case that $n$ is odd.  When $n$ is even, we obtain
\[
\bw = \sum_{k=1}^{\lfloor \frac{n-1}{2}\rfloor} s(k) s(n-k)
+ \binom{s(n/2)}{2} + s(n/2)
= \sum_{k=1}^{n/2 - 1} s(k) s(n-k)
+ \binom{s(n/2)+1}{2},
\]
where the summation comes from the pairs $g_1(x)$, $g_2(x)$ with $\deg(g_1(x)) \neq \deg(g_2(x))$ and the remainder comes from the pairs with $\deg(g_1(x)) = \deg(g_2(x))$.
This completes the proof.
\end{proof}

\begin{ex}\label{e:frlist}
Table~\ref{tb:fxdegree2345} shows the irreducible polynomials in $\fx$ of degree $d \leq 5$ with and without a linear term. We can use these to construct all irreducible polynomials of degree 5 in $\fr$.

\begin{table}[t]
\begin{center}
\begin{tabular}{|c|c|c|c|}
\hline
Deg. & $s(d)$
& Irreducible polynomials & Irreducible polynomials \\
&& with linear term & without linear term \\
\hline
2 & 1
&  $x^2+x+1$ & \\
\hline
3 & 1
&  $x^3+x+1$ & $x^3+x^2+1$ \\
\hline
4 & 2
&  $x^4+x^3+x^2+x+1$ & $x^4+x^3+1$ \\
&& $x^4+x+1$ &  \\
\hline
5 & 3
&  $x^5+x^4+x^3+x+1$ & $x^5+x^4+x^3+x^2+1$  \\
&& $x^5+x^4+x^2+x+1$ & $x^5+x^3+1$ \\
&& $x^5+x^3+x^2+x+1$ & \\
\hline
\end{tabular}
\end{center}
\caption{Irreducible polynomials in $\fx$ of degree $d = 2, 3, 4, 5$.}
\label{tb:fxdegree2345}
\end{table}

To find classic type irreducible polynomials of degree 5 in $\fr$, we take all irreducible polynomials in $\fx$ of degree 5 without a linear term; that is, $x^5+x^4+x^3+x^2+1$ and $x^5+x^3+1$. We see that $\bcn{5}=a(5)-s(5)=5-3=2$.

Next, we note that a tame type irreducible polynomial of degree 5 in $\fr$ is of the form $x^3g(x) \in \fx$ or $x^2g(x) \in \fx$, where $g(x)$ is an irreducible polynomial of degree 2 with a linear term or of degree 3 with a linear term, respectively. Thus, the tame type irreducible polynomials in $\fr$ are $x^5+x^4+x^3=x^3(x^2+x+1)$ and $x^5+x^3+x^2=x^2(x^3+x+1)$, and we see that $\btn{5}=s(3)+s(2)=1+1=2$.

Lastly, we build wild type irreducible polynomials of degree 5 in $\fr$ by multiplying pairs of irreducible polynomials with a linear term whose degree add to 5. We have the following possibilities: $(x+1)(x^4+x^3+x^2+x+1)=x^5+1$, $(x+1)(x^4+x+1)=x^5+x^4+x^2+1$, and $(x^2+x+1)(x^3+x+1)=x^5+x^2+1$. Thus, the wild type irreducible polynomials of degree 5 in $\fr$ are $x^5+1, x^5+x^4+x^2+1,$ and $x^5+x^2+1$, and we see $\bwn{5}=s(1)s(4)+s(2)s(3)=1\cdot 2 + 1\cdot 1=3$~and
\[
b(5)=\bcn{5}+\btn{5}+\bwn{5}=a(5)-s(5)+s(3)+s(2)+s(1)s(4)+s(2)s(3)=3+2+3=8.
\]
All of the irreducible polynomials of degree 5 are given in Table~\ref{tb:frdegree5}.
\end{ex}

Lemma~\ref{l:frcount} and Example~\ref{e:frlist} highlight that we can both build and enumerate irreducible polynomials of a given degree in $\fr$ if we have enough knowledge about the irreducible polynomials in $\fx$. While Lemma 4.1 gives an explicit formula for counting irreducibles of a given degree $n$ in $\fr$, it is reliant upon knowing $s(n)$ for arbitrary $n$. We next show that we can determine the number of irreducible polynomials with a linear term of a given degree $n$ in $\fx$ by displaying a bijection between irreducible polynomials in this set and self-irreducible polynomials of degree $2n$.

Let $f(x) \in \mathbb{F}_q[x]$ be given by $f(x)=c_nx^n+c_{n-1}x^{n-1}+\cdots+c_1x+c_0$. The \emph{reciprocal} polynomial of $f(x)$, denoted $f^*(x)$, is given by $f^*(x)=x^nf\big(\frac{1}{x}\big)=c_0x^n+c_1x^{n-1}+\cdots+c_{n-1}x+c_n.$ If $f(x)=f^*(x)$, then $f(x)$ is said to be a \emph{self-reciprocal polynomial} (or a \emph{palindrome}).
Any polynomial $f(x)$ of degree $n$ over $\mathbb{F}_q$ can be transformed into a self-reciprocal polynomial $f^Q(x)=x^nf(x+\frac{1}{x})$ of degree $2n$.

Conversely, the~following theorem states that every self-reciprocal polynomial of degree $2n$ lies in the image of this transformation.

\begin{thm}[{\cite[Lemma~2.75 in Ch.~7]{jungnickel}}]\label{t:selfreciprocaltransform}
Let $g(x)$ be any monic self-reciprocal polynomial of degree~$2n$ in $\Fq[x]$.  Then there exists a polynomial $f(x)$ of degree $n$ in $\Fq[x]$ such that $g(x) = f^Q(x)$. If $g(x)$ is irreducible, then $f(x)$ is also irreducible.
\end{thm}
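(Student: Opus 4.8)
The plan is to build $f$ explicitly from the coefficients of $g$, rather than from its roots, and then to read off irreducibility from the fact that the transformation $f \mapsto f^Q$ sends products to products.

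For the existence of $f$, I would start from the coefficient symmetry of $g$. Writing $g(x) = \sum_{j=0}^{2n} b_j x^j$, the monic self-reciprocal conditions read $b_j = b_{2n-j}$ and $b_{2n} = 1$. Dividing by $x^n$ and pairing the $x^i$ and $x^{-i}$ terms gives
\[
\frac{g(x)}{x^n} = b_n + \sum_{j=1}^{n} b_{n+j}\,(x^j + x^{-j}).
\]
The crucial point is that each $x^j + x^{-j}$ is a polynomial in $y = x + x^{-1}$ with prime-field coefficients. Setting $D_0 = 2$, $D_1(y) = y$, and $D_{j+1}(y) = y\,D_j(y) - D_{j-1}(y)$, an induction using the identity $(x^j + x^{-j})(x + x^{-1}) = (x^{j+1} + x^{-(j+1)}) + (x^{j-1} + x^{-(j-1)})$ shows $D_j(x + x^{-1}) = x^j + x^{-j}$ in every characteristic. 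Then
\[
f(y) = b_n + \sum_{j=1}^{n} b_{n+j}\,D_j(y) \in \Fq[y]
\]
is monic of degree $n$ (the unique degree-$n$ contribution is $b_{2n} D_n(y) = D_n(y)$), and by construction $g(x)/x^n = f(x + x^{-1})$, that is, $g = f^Q$.

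For the irreducibility claim I would argue the contrapositive. First note that $f \mapsto f^Q$ is multiplicative: since $\deg(f_1 f_2) = \deg f_1 + \deg f_2$ and evaluation at $x + x^{-1}$ respects products,
\[
(f_1 f_2)^Q(x) = x^{\deg f_1 + \deg f_2} f_1(x + x^{-1}) f_2(x + x^{-1}) = f_1^Q(x)\,f_2^Q(x),
\]
with $\deg(f_i^Q) = 2\deg f_i$. Hence if $f = f_1 f_2$ with $\deg f_1, \deg f_2 \ge 1$, then $g = f^Q = f_1^Q f_2^Q$ factors into two polynomials of degree at least $2$, so $g$ is reducible. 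Equivalently, if $g$ is irreducible then $f$ must be irreducible.

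I expect the only delicate point to be confirming that $f$ has coefficients in $\Fq$ and not merely in some extension. A root-based construction, pairing the roots $\alpha, \alpha^{-1}$ of $g$ and taking $f(y) = \prod_i \bigl(y - (\alpha_i + \alpha_i^{-1})\bigr)$, yields the correct polynomial but a priori only over the splitting field, and would then require a Galois-descent argument together with separate handling of the self-reciprocal roots $\alpha = \pm 1$. Organizing the existence step around the identity $x^j + x^{-j} = D_j(x + x^{-1})$ avoids this entirely, since it expresses $f$ directly through the $\Fq$-coefficients $b_j$ of $g$; this is the main reason I would set the argument up this way.
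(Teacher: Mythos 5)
Your proof is correct. Note that there is nothing in the paper to compare it against: the paper does not prove this statement but imports it verbatim from \cite[Lemma~2.75 in Ch.~7]{jungnickel}, so your argument supplies a proof that the authors outsource to a citation.

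The argument itself is sound at every step. The symmetry $b_{n-j}=b_{2n-(n-j)}=b_{n+j}$ justifies the pairing $g(x)/x^n = b_n + \sum_{j=1}^n b_{n+j}(x^j+x^{-j})$; the recursion $D_{j+1}(y)=yD_j(y)-D_{j-1}(y)$ with the identity $(x^j+x^{-j})(x+x^{-1})=(x^{j+1}+x^{-(j+1)})+(x^{j-1}+x^{-(j-1)})$ does give $D_j(x+x^{-1})=x^j+x^{-j}$ over the prime field in every characteristic (in characteristic $2$ the value $D_0=2=0$ is harmless, since your formula for $f$ uses $b_n$ directly rather than routing it through $D_0$); each $D_j$ is monic of degree $j$ over $\mathbb{Z}$ and stays monic after reduction, so $f$ is monic of degree $n$ because only $b_{2n}D_n=D_n$ contributes in degree $n$; and the multiplicativity $(f_1f_2)^Q=f_1^Qf_2^Q$ with $\deg f_i^Q = 2\deg f_i \ge 2$ makes the contrapositive of the irreducibility claim immediate. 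Your closing remark is also well taken: building $f$ from the coefficients of $g$ via the identity $x^j+x^{-j}=D_j(x+x^{-1})$ is exactly what keeps the construction inside $\Fq[y]$ (it amounts to the fact that the Laurent polynomials fixed by $x\mapsto x^{-1}$ form the subring $\Fq[x+x^{-1}]$), whereas the root-pairing construction $f(y)=\prod_i\bigl(y-(\alpha_i+\alpha_i^{-1})\bigr)$ would need a descent argument to land back in $\Fq$. This is the standard style of proof for this lemma, so had the paper included one, it would likely have looked much like yours.
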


This theorem implies that the map from the set of monic irreducible polynomials of degree $n$ in $\mathbb{F}_q[x]$ to the set of monic irreducible self-reciprocal polynomials of degree $2n$ over $\mathbb{F}_q$ is surjective.
However, if we start with a monic irreducible polynomial of degree $n$ over $\mathbb{F}_q$, then $f^Q(x)$ is a monic self-reciprocal polynomial but is not necessarily irreducible. For example, $f(x)=x^5+x^3+1$ is irreducible over $\fx$ but $f^Q(x)=x^{10}+x^6+x^5+x^4+1=(x^5+x^4+x^2+x+1)^2$ is not irreducible over $\fx$. The following results answer the question of what restrictions must be placed on a monic irreducible polynomial $f(x)$ so that $f^Q(x)$ is irreducible. In particular, the next result originally appeared in \cite{varsamov}; we provide another reference for ease of access.

\begin{thm}[{\cite[Theorem~6 and Corollary~7]{meyn}}]\label{t:meyn}
Let $f(x)=c_nx^n+c_{n-1}x^{n-1}+\cdots+c_1x+c_0$ be a monic irreducible polynomial of degree $n \neq 1$ over $\bbf_q$, where $q$ is even. Then $f^Q(x)$ is also irreducible if and only if $Tr(f(x)) = c_1/c_0 = 1$.  In particular, if $q = 2$, then $f^Q(x)$ is irreducible if and only if $c_1 = 1$.
\end{thm}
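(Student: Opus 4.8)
The plan is to work directly with the roots of $f^Q$ via the defining substitution $x \mapsto x + x^{-1}$, reducing the irreducibility of the degree-$2n$ polynomial $f^Q$ to the solvability of a single quadratic over $\mathbb{F}_{q^n}$, and then to convert that solvability into the trace condition on the coefficients. First I would record the structure of $f^Q(x) = x^n f(x + x^{-1})$: a short computation shows it is monic of degree $2n$, self-reciprocal, and satisfies $f^Q(0) = 1$, so $0$ is not a root. Let $\beta_1, \ldots, \beta_n \in \overline{\mathbb{F}_q}$ be the roots of $f$; these are distinct since $f$ is separable over a finite field, and each $\beta_i \neq 0$ because $f$ is irreducible of degree $n \neq 1$. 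An element $\gamma$ is a root of $f^Q$ exactly when $\gamma + \gamma^{-1} = \beta_i$ for some $i$, i.e. when $\gamma$ is a root of the quadratic $x^2 + \beta_i x + 1$ over $\mathbb{F}_q(\beta_i) = \mathbb{F}_{q^n}$. In characteristic $2$ this quadratic is separable (its derivative is $\beta_i \neq 0$) with two distinct roots $\gamma_i, \gamma_i^{-1}$, so the $2n$ roots of $f^Q$ are precisely the pairs $\{\gamma_i, \gamma_i^{-1}\}$ lying over the $\beta_i$.

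Next I would fix a single root $\gamma$ lying over $\beta = \beta_1$. Since $\mathbb{F}_q(\gamma) \supseteq \mathbb{F}_q(\gamma + \gamma^{-1}) = \mathbb{F}_{q^n}$ and $\gamma$ is quadratic over $\mathbb{F}_{q^n}$, its degree over $\mathbb{F}_q$ is either $n$ or $2n$. The key dichotomy is that $f^Q$ is irreducible if and only if $[\mathbb{F}_q(\gamma):\mathbb{F}_q] = 2n$: if the degree is $2n$, the minimal polynomial of $\gamma$ is monic of degree $2n$ and divides the monic degree-$2n$ polynomial $f^Q$, forcing equality and hence irreducibility; if the degree is $n$, then $\gamma \in \mathbb{F}_{q^n}$ and its minimal polynomial is a proper factor of $f^Q$. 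Thus the whole problem collapses to deciding when $x^2 + \beta x + 1$ is irreducible over $\mathbb{F}_{q^n}$.

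Finally I would settle the quadratic using Artin--Schreier theory and then carry out the trace bookkeeping. Substituting $x = \beta z$ converts $x^2 + \beta x + 1$ into $\beta^2(z^2 + z + \beta^{-2})$, which is irreducible over $\mathbb{F}_{q^n}$ if and only if $\mathrm{Tr}_{\mathbb{F}_{q^n}/\mathbb{F}_2}(\beta^{-2}) = 1$. To express this in terms of the coefficients of $f$, I would use three standard facts: squaring is a Frobenius, so $\mathrm{Tr}(\beta^{-2}) = \mathrm{Tr}(\beta^{-1})$ for the absolute trace; transitivity $\mathrm{Tr}_{\mathbb{F}_{q^n}/\mathbb{F}_2} = \mathrm{Tr}_{\mathbb{F}_q/\mathbb{F}_2} \circ \mathrm{Tr}_{\mathbb{F}_{q^n}/\mathbb{F}_q}$; and the Vieta identity $\mathrm{Tr}_{\mathbb{F}_{q^n}/\mathbb{F}_q}(\beta^{-1}) = \sum_i \beta_i^{-1} = c_1/c_0$, valid in characteristic $2$. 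Together these identify the Artin--Schreier invariant with $\mathrm{Tr}_{\mathbb{F}_q/\mathbb{F}_2}(c_1/c_0)$, which is the quantity $\mathrm{Tr}(f)$, and the criterion becomes $\mathrm{Tr}(f) = 1$. When $q = 2$ the outer trace is the identity and the condition collapses to $c_1/c_0 = c_1 = 1$, recovering the stated specialization.

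I expect the main obstacle to be precisely this last step: one must recognize that the correct Artin--Schreier invariant is the \emph{absolute} trace to $\mathbb{F}_2$, and then thread it back through trace transitivity to the relative trace $c_1/c_0$, all while keeping the degenerate cases controlled --- ruling out $\beta = 0$ and $\gamma = \gamma^{-1}$, which is exactly where the hypothesis $n \neq 1$ is used. The earlier, more geometric steps (the root description and the degree dichotomy) are routine by comparison.
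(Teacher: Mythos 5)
The paper itself contains no proof of Theorem~\ref{t:meyn}: it is imported verbatim from Meyn's paper (with a pointer to Varshamov for the original source), and only its $q=2$ specialization is ever used, via Corollary~\ref{c:irredreciprocal}. So there is no internal argument to compare yours against; judged as a free-standing proof, your proposal is correct, and it is essentially the standard argument behind Meyn's criterion. Every step checks out: $f^Q$ is monic of degree $2n$ with $f^Q(0)=1$; since $n\neq 1$ forces $c_0\neq 0$, the roots $\beta_i$ of $f$ are nonzero and the roots of $f^Q$ are exactly the solutions of $\gamma+\gamma^{-1}=\beta_i$, two distinct ones over each $\beta_i$; for such a $\gamma$ one has $\mathbb{F}_q(\gamma)=\mathbb{F}_{q^n}(\gamma)$, so $f^Q$ is irreducible over $\mathbb{F}_q$ if and only if $x^2+\beta x+1$ is irreducible over $\mathbb{F}_{q^n}$; the substitution $x=\beta z$ plus the Artin--Schreier criterion turn this into $\mathrm{Tr}_{\mathbb{F}_{q^n}/\mathbb{F}_2}(\beta^{-2})=1$; and Frobenius-invariance of the absolute trace, transitivity, and the characteristic-$2$ Vieta identity $\mathrm{Tr}_{\mathbb{F}_{q^n}/\mathbb{F}_q}(\beta^{-1})=c_1/c_0$ convert this to $\mathrm{Tr}_{\mathbb{F}_q/\mathbb{F}_2}(c_1/c_0)=1$. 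Your identification of where $n\neq 1$ enters (ruling out $f=x$, i.e.\ $\beta=0$, and with it the degenerate case $\gamma=\gamma^{-1}$) is also accurate.

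One thing your derivation genuinely adds: the criterion you obtain, $\mathrm{Tr}_{\mathbb{F}_q/\mathbb{F}_2}(c_1/c_0)=1$, is the correct statement of Meyn's theorem for general even $q$. The transcription in the paper, ``$Tr(f(x)) = c_1/c_0 = 1$,'' read literally as a chain of equalities, asserts the stronger condition $c_1/c_0=1$, which is false for $q>2$: over $\mathbb{F}_4$ with $\alpha^2=\alpha+1$, the polynomial $f=x^2+x+\alpha$ is irreducible and has $c_1/c_0=\alpha^{-1}\neq 1$, yet $f^Q=x^4+x^3+\alpha x^2+x+1$ is irreducible because $\mathrm{Tr}_{\mathbb{F}_4/\mathbb{F}_2}(\alpha^{-1})=1$. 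So your reading disambiguates the statement in the right direction, and the $q=2$ case the paper actually needs (where $c_0=1$, the outer trace is trivial, and the condition collapses to $c_1=1$) comes out correctly. The only shortfall is stylistic: the proposal is written as a plan rather than a polished proof, but every claimed step is justified and would survive being written out in full.
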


Theorem~\ref{t:meyn} gives us the precise connection that we want.  We restate it in the following corollary.

\begin{cor}\label{c:irredreciprocal}
The number of irreducible self-reciprocal polynomials in $\fx$ of degree $2n$ is equal to the number of irreducible polynomials if $\fx$ of degree $n$ with a linear term.
\end{cor}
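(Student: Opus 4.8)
The plan is to exhibit an explicit bijection realizing the claimed equality of counts. Write $A$ for the set of monic irreducible polynomials of degree $n$ in $\fx$ with nonzero linear term, and $B$ for the set of monic irreducible self-reciprocal polynomials of degree $2n$ in $\fx$; the corollary asserts $|A| = |B|$. I would prove this by showing that the transformation $\Phi\colon f \mapsto f^Q = x^n f(x + 1/x)$ restricts to a bijection $A \to B$. Note first that for $n \ge 2$ every irreducible $f$ automatically has nonzero constant term (otherwise $x \mid f$ would contradict irreducibility), so $A$ is exactly the set counted by $s(n)$; the degenerate case $n = 1$ can be checked by hand, since there $A = \{x+1\}$ and $\Phi(x+1) = x^2 + x + 1$ is the unique element of $B$.

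For well-definedness and surjectivity, I would lean directly on the two theorems already in hand. By construction $f^Q$ is monic, self-reciprocal, and of degree $2n$, so the only issue is irreducibility: Theorem~\ref{t:meyn} (with $q = 2$) says $f^Q$ is irreducible exactly when the linear coefficient of $f$ equals $1$, which is precisely the condition defining $A$. Hence $\Phi$ maps $A$ into $B$. Conversely, given $g \in B$, Theorem~\ref{t:selfreciprocaltransform} furnishes an irreducible $f$ of degree $n$ with $g = f^Q$; since $g = f^Q$ is irreducible, Theorem~\ref{t:meyn} forces the linear coefficient of $f$ to be $1$, so $f \in A$ and $\Phi(f) = g$. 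Thus $\Phi$ maps $A$ onto $B$.

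The remaining, and I expect the only genuinely substantive, step is injectivity, as neither cited theorem supplies it. Suppose $f_1^Q = f_2^Q$ with $f_1, f_2 \in A$. Cancelling the common factor $x^n$ gives the equality $f_1(x + 1/x) = f_2(x + 1/x)$ of Laurent polynomials in $x$, so $h(x + 1/x) = 0$ where $h = f_1 - f_2$. If $h \ne 0$, let $b_d y^d$ be its top-degree term; since $(x + 1/x)^k$ has leading term $x^k$ in $x$, the substitution $y = x + 1/x$ sends $h$ to a Laurent polynomial whose top $x$-degree term is $b_d x^d \ne 0$, a contradiction. Hence $h = 0$ and $f_1 = f_2$, so $\Phi$ is injective.

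Combining the three steps, $\Phi$ is a bijection from $A$ onto $B$, whence $|A| = |B|$ and the corollary follows. The crux is the injectivity argument, which rests on the linear independence of the powers $(x + 1/x)^k$ over $\bbf_2$; everything else is essentially a repackaging of Theorems~\ref{t:meyn} and~\ref{t:selfreciprocaltransform}.
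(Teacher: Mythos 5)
Your proof is correct and takes essentially the same route as the paper: the paper obtains the corollary directly by combining Theorem~\ref{t:selfreciprocaltransform} (surjectivity of $f \mapsto f^Q$ onto self-reciprocal irreducibles) with Theorem~\ref{t:meyn} (the criterion $c_1 = 1$ for irreducibility of $f^Q$ over $\bbf_2$), exactly as you do. The only difference is that you explicitly verify injectivity of $f \mapsto f^Q$ and the degenerate case $n=1$, details the paper leaves implicit, and these verifications are sound.
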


Note that by Corollary~\ref{c:irredreciprocal}, we can replace each $s(n)$ term in Lemma~\ref{l:frcount} with $r_2(n)$, which represents the number of self-reciprocal irreducible polynomials of degree $2n$ over $\fx$.  While $s(n)$ is not generally known for arbitrary $n$, there is a known formula for $r_2(n)$ using M\"obius inversion.

\begin{lem}[{\cite[Theorems~2 and~3]{carlitz}}] \label{l:carlitz}
The number of monic self-reciprocal irreducible polynomials of degree $2n$ over $\bbf_q$ is given by
\[
r_q(n) = \begin{cases}
\displaystyle \frac{1}{2n}(q^n-1) & \text{ if $q$ is odd and $n=2^d$;} \\[0.5em]
\displaystyle \frac{1}{2n} \sum_{\substack{d \mid n \\ d \text{ odd}}} \mu(d)q^{n/d} & \text{ otherwise,}
\end{cases}
\]
where $\mu$ is the M\"{o}bius function.
\end{lem}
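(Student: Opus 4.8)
\section*{Proof proposal for Lemma~\ref{l:carlitz}}

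The plan is to prove the formula by passing to the roots of the polynomials and reducing the count to a M\"obius inversion over the divisors of $2n$. First I would record the root-theoretic description of monic self-reciprocal irreducible polynomials of degree $2n$ over $\Fq$. If $g$ is such a polynomial and $\alpha \in \mathbb{F}_{q^{2n}}$ is a root, then $g$ is separable and its roots form a single Frobenius orbit $\{\alpha, \alpha^q, \dots, \alpha^{q^{2n-1}}\}$ of size $2n$. Self-reciprocity forces the root set to be closed under $\beta \mapsto \beta^{-1}$, so $\alpha^{-1} = \alpha^{q^j}$ for some $0 \le j < 2n$. Applying the $q^j$-power Frobenius to this relation gives $\alpha = \alpha^{q^{2j}}$, and since $\alpha$ generates $\mathbb{F}_{q^{2n}}$ this forces $n \mid j$; the choice $j = 0$ yields $\alpha^2 = 1$, impossible for an irreducible of degree $2n \ge 2$, so $j = n$ and hence $\alpha^{q^n + 1} = 1$. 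Conversely, any $\alpha$ of degree exactly $2n$ with $\alpha^{q^n+1} = 1$ has a self-reciprocal minimal polynomial. Since each such polynomial accounts for exactly $2n$ roots, this gives
\[
2n \cdot r_q(n) = \#\{\alpha \in \mathbb{F}_{q^{2n}} : \alpha^{q^n+1} = 1 \text{ and } \deg_{\Fq}(\alpha) = 2n\}.
\]

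Next I would count these elements by M\"obius inversion. The elements satisfying $\alpha^{q^n+1} = 1$ form the unique cyclic subgroup $C$ of order $q^n+1$ in $\mathbb{F}_{q^{2n}}^\times$, using that $q^n + 1 \mid q^{2n} - 1$. For each $e \mid 2n$, the number of elements of $C$ lying in the subfield $\mathbb{F}_{q^e}$ equals $|C \cap \mathbb{F}_{q^e}^\times| = \gcd(q^n+1,\,q^e-1)$, since $\mathbb{F}_{q^e}^\times$ is cyclic of order $q^e-1$. M\"obius inversion on the divisor lattice of $2n$ then yields
\[
2n \cdot r_q(n) = \sum_{e \mid 2n} \mu(2n/e)\, \gcd(q^n+1,\,q^e-1).
\]

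It then remains to evaluate the gcd and simplify. Writing $d = \gcd(n,e)$ and using $\gcd(n/d, e/d) = 1$ together with $e \mid 2n$, one sees $e/d \in \{1,2\}$. In the case $e \mid n$ (that is, $e/d = 1$), reducing $q^n + 1$ modulo $q^e - 1$ gives $q^n + 1 \equiv 2$, so $\gcd(q^n+1,\,q^e-1) = \gcd(2,\, q^e - 1)$, which is $1$ for $q$ even and $2$ for $q$ odd. In the case $e = 2d$ with $n/d$ odd (that is, $e/d = 2$), reducing modulo $q^d+1$ shows $q^d + 1 \mid q^n + 1$, and for $q$ even the factors $q^d-1$ and $q^d+1$ are coprime to $2$ and to $q^n+1$ respectively, so $\gcd(q^n+1,\,q^e-1) = q^d + 1$ exactly. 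Substituting these values, re-indexing the surviving terms by $d = \gcd(n,e)$ (under which $\mu(2n/e) = \mu(n/d)$), and using $\sum_{f \mid 2n}\mu(f) = 0$ to cancel the constant contributions, collapses the sum to $\sum_{d \mid n,\, d \text{ odd}} \mu(d)\, q^{n/d}$, which is the stated formula for $q$ even.

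I expect the evaluation and simplification in the last step to be the main obstacle, specifically the case of odd $q$. The delicate point is the systematic bookkeeping of the powers of $2$ in $\gcd(q^n+1,\,q^e-1)$: when $q$ is odd both $q^d-1$ and $q^d+1$ are even, their overlap contributes extra factors of $2$, and these are precisely what separate the two branches of the formula. The extra cancellations that occur when $n$ is a power of $2$ are what collapse the M\"obius sum to the closed form $\tfrac{1}{2n}(q^n - 1)$ in the exceptional case, and isolating them cleanly requires tracking the $2$-adic valuations of $q^{n/d}\pm 1$ across all $d \mid n$. For even $q$ these complications vanish, the gcd is clean, and the reduction above is direct. (For even $q$ one may alternatively bypass the root computation entirely and invoke Theorems~\ref{t:selfreciprocaltransform} and~\ref{t:meyn}, which already identify these polynomials with the degree-$n$ irreducibles having $c_1 = 1$.)
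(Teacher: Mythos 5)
First, a point of comparison: the paper does not prove Lemma~\ref{l:carlitz} at all --- it is imported verbatim from Carlitz with the citation \cite[Theorems~2 and~3]{carlitz}, and is only ever applied with $q=2$ (via Corollary~\ref{c:irredreciprocal} inside Lemma~\ref{l:frcount}). So there is no internal proof to measure you against; what you have written is an attempt at a self-contained proof, and your route (roots of a self-reciprocal irreducible of degree $2n$ form a single Frobenius orbit inside the cyclic subgroup of order $q^n+1$, then M\"obius inversion over subfields, then evaluation of $\gcd(q^n+1,q^e-1)$) is essentially the classical argument, in the same spirit as Carlitz's and Meyn's treatments. Your steps that are actually carried out check out: the dichotomy $j\in\{0,n\}$, the exclusion of $j=0$ for degree $2n\ge 2$, the identity $|C\cap\mathbb{F}_{q^e}^\times|=\gcd(q^n+1,q^e-1)$ inside the cyclic group $\mathbb{F}_{q^{2n}}^\times$, the observation that $e\mid 2n$ forces $e/\gcd(n,e)\in\{1,2\}$, and the cancellation via $\sum_{f\mid 2n}\mu(f)=0$ all correctly yield $2n\,r_q(n)=\sum_{d\mid n,\ d\text{ odd}}\mu(d)\,q^{n/d}$ when $q$ is even. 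In particular your argument genuinely proves the case the paper needs, $q=2$.

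However, as a proof of the lemma as stated there is a genuine gap: the odd-$q$ case is never established, and you say so yourself (``I expect the evaluation \dots to be the main obstacle, specifically the case of odd $q$''). Announcing where the difficulty lies is not the same as resolving it. Concretely, for odd $q$ every divisor $e\mid n$ contributes $\gcd(q^n+1,q^e-1)=2$ rather than $1$, so the constant terms no longer cancel against $\sum_{f\mid 2n}\mu(f)=0$; and for $e=2d$ with $n/d$ odd the gcd is the odd part of $q^d+1$ times a power of $2$ determined by comparing $v_2(q^n+1)$ with $v_2(q^d-1)+v_2(q^d+1)$, which depends on the parity of $n$ and on $q \bmod 4$. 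It is precisely this bookkeeping that produces the exceptional branch $r_q(n)=\tfrac{1}{2n}(q^n-1)$ when $n=2^d$ (note that branch differs from the generic formula, which would give $q^n/2n$, by exactly one element --- the root $\alpha=-1$ of degree $1$, which only exists in $C$ when $q$ is odd), and none of that is derived in your write-up. So your proposal proves the lemma for even $q$ only; to claim the full statement you must either complete the $2$-adic analysis or do what the paper does and cite \cite{carlitz}.
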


\begin{thm}
There is an algebraic expression for the number $b(n)$ of irreducible polynomials in $\fr$ of degree $n$ in terms of the M\"{o}bius function.
\end{thm}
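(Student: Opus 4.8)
The plan is to assemble the claimed closed form by feeding the already-established Möbius expressions into the type decomposition of Lemma~\ref{l:frcount}. That lemma writes $b(n) = \bc + \bt + \bw$ with each summand expressed through $a(n)$ and the auxiliary count $s(\cdot)$, so the entire task reduces to rewriting $a(n)$ and every occurrence of $s(\cdot)$ in terms of $\mu$. For $a(n)$ I would simply invoke the classical count recalled at the start of this section in its $q=2$ form, $a(n) = \frac{1}{n}\sum_{d\mid n}\mu(n/d)\,2^d$.

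Next I would translate the $s(\cdot)$ terms. By Corollary~\ref{c:irredreciprocal}, for each index $m \ge 2$ one has $s(m) = r_2(m)$, and since $q=2$ is even, only the ``otherwise'' branch of Lemma~\ref{l:carlitz} applies, giving the purely Möbius-theoretic formula $r_2(m) = \frac{1}{2m}\sum_{d\mid m,\ d\text{ odd}}\mu(d)\,2^{m/d}$. With these two substitutions in hand, the classic and tame contributions $\bc = a(n)-s(n)$ and $\bt = s(n-2)+s(n-3)$ become explicit immediately. For $\bw$ I would split on parity exactly as in Lemma~\ref{l:frcount}: in the odd case $\bw = \sum_{k=1}^{\lfloor n/2\rfloor} s(k)s(n-k)$ becomes a sum of products of two Carlitz expressions, while in the even case I would additionally expand the diagonal term via $\binom{s(n/2)+1}{2} = \tfrac{1}{2}\,r_2(n/2)\big(r_2(n/2)+1\big)$, a quadratic in the Möbius expression for $r_2(n/2)$. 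Collecting these pieces yields a single (if unwieldy) algebraic expression for $b(n)$ in terms of $\mu$, which is precisely what the statement asserts.

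Because the theorem only claims the existence of such an expression and not a simplified or consolidated form, I do not expect a genuine mathematical obstacle; the effort is one of careful substitution and bookkeeping rather than a new idea. The points demanding the most attention are the parity case-split in $\bw$, the correct expansion of the binomial diagonal term into a quadratic in $r_2(n/2)$, and the small-index edge cases that fall outside the range of Theorem~\ref{t:meyn} and hence of Corollary~\ref{c:irredreciprocal}. In particular, $s(1)=1$ (the single degree-one irreducible $x+1$ with nonzero linear and constant terms) must be inserted by hand wherever it appears, namely in the $k=1$ term of the wild-type sum and, for small $n$, in the shifted argument $n-3$ of the tame term; once these finitely many exceptional values are recorded, the remaining arguments lie in the range $m \ge 2$ where $s(m) = r_2(m)$ holds and the Carlitz formula delivers the desired Möbius expression.
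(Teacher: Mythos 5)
Your proposal is correct and follows exactly the paper's proof, which is the one-line instruction to substitute Corollary~\ref{c:irredreciprocal} and Lemma~\ref{l:carlitz} into Lemma~\ref{l:frcount}; your version merely spells out the bookkeeping. Your attention to the small-index edge cases (e.g.\ $s(1)$, which falls outside the range of Theorem~\ref{t:meyn}) is a point of care the paper's terse proof glosses over, but it does not change the argument.
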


\begin{proof}
Substitute Corollary~\ref{c:irredreciprocal} and Lemma~\ref{l:carlitz} into Lemma~\ref{l:frcount}.
\end{proof}

\section{Future Work} \label{future}

We provide several problems that indicate possible extensions of this work. These problems fall into two categories: those that focus on further exploring details in the current setting of numerical semigroup algebras and those that focus on expanding the results of this paper beyond the current setting of numerical semigroup algebras.

In Theorem \ref{t:atomicdensity}, we proved that any numerical semigroup algebra $\Fq[S]$ has atomic density 0 by splitting the irreducible elements of $\Fq[S]$ into finitely many classes based on their factorizations in~$\Fq[x]$. However, the estimates used for how many irreducible polynomials are in each class were unrefined in the general setting of $\Fq[S]$.
We were able to obtain more refined results for the semigroup algebra $\fr$ but it remains an interesting task to do this in the broader setting of numerical semigroup algebras.

\begin{problem}\label{q:generaltypebreakdown2}
Classify the types of irreducible polynomials in more general numerical semigroup algebras, and provide a tighter bound on the number irreducible polynomials of each degree therein.
\end{problem}

Noting that the atomic density of $\Fq[x]$ is known to be 0, a natural next setting in which to study atomic density is one in which there exists a natural ordering to the elements of the semigroup so that there exists a natural value to tend to infinity. There is not such a natural ordering for all semigroups, but there is for numerical semigroups. Can we obtain results similar to those in this paper in other settings where there exists one or more natural orderings of the elements?

\begin{problem}\label{q:affinealg2}
Investigate atomic density of finitely generated semigroup algebras
over finite fields.
\end{problem}

In stark contrast to the univariate case, it is known that, when ordered by total degree, the multivariate polynomial ring $\bbf[x_1, \ldots, x_k]$ over a finite field $\bbf$ has atomic density 1~\cite{carlitz1963distribution}.  As such, we~conjecture the following.

\begin{conjecture}\label{q:affine}
Any affine semigroup algebra $\bbf[S]$ over a finite field $\bbf$ has atomic density 1.
\end{conjecture}

Factorization invariants are widely studied for families of rings and semigroups~\cite{nonuniq}, though little is known in the context of semigroup algebras over finite fields.

\begin{problem}\label{q:elasticity}
Assume the atomic density of a semigroup algebra is zero. What conclusions can be made about other factorization invariants, such as elasticity?  What happens when the atomic density is nonzero?
\end{problem}

We can also consider atomic density in the ring of formal power series $R[[X]]$, where~$R$ is a commutative ring. In this case, it is of interest to understand how the ring $R$ affects the atomic density.

\begin{problem} \label{q:pwrseries}
Determine the atomic densities of formal power series rings $\mathbb F_q[\![S]\!]$ and $\mathbb F_q[S][\![y]\!]$, where~$S$ is a numerical semigroup.
\end{problem}

Thus far, only finite fields have been considered, in part because coefficients can then be chosen uniformly.  To consider this question over infinite fields, one needs to choose a probability distribution on the base field.

\begin{problem}\label{q:infintefield}
Given a numerical semigroup $S$ and a probability distribution on $\mathbb Q$, find the atomic density of $\mathbb Q[S]$.
\end{problem}

It is a well known fact that each element of $\overline{\mathbb F}_p$, the algebraic closure of $\mathbb F_p$, is an element of $\mathbb F_{p^r}$ for some $r \ge 1$. As such, we suspect the following holds for any probability distribution on $\overline{\mathbb F}_p$.

\begin{conjecture}\label{q:algclosure}
Given a numerical semigroup $S$ and a prime $p$, the atomic density of $\overline{\mathbb F}_p[S]$ is 0.
\end{conjecture}

We close by reiterating a problem originally found in~\cite{acmperiodic}.

\begin{problem}\label{q:acm}
Determine the atomic density of the arithmetical congruence monoid $M_{a,b}$.
\end{problem}

\bibliographystyle{amsplain}
\bibliography{bibliography}

\end{document}